\documentclass[12pt]{article}

\textwidth 16.3cm \textheight 22cm \topmargin -0.5cm
\evensidemargin  -0.2cm \oddsidemargin -0.2cm

\usepackage[dvips]{graphics}
\usepackage[dvips]{epsfig}
\usepackage{authblk}

\newtheorem{lemma}{Lemma}

\newtheorem{corollary}[lemma]{Corollary}
\newtheorem{proposition}[lemma]{Proposition}
\newtheorem{definition}{Definition}
\newtheorem{remark}{Remark}

\newtheorem{example}{Example}
\newenvironment{proof}[1][Proof]{\textbf{#1.} }{\ \rule{0.5em}{0.5em}}

\usepackage{latexsym}
\usepackage{amsfonts}
\usepackage{amssymb}
\usepackage{amsmath}
\usepackage[english]{babel}

\def\C{\mathcal C}

\def\HH{\overline{\mathcal H}}

\def\D{\Delta}

\def\f{\varphi}
\def\G{\Gamma}
\def\g{\gamma}

\def\int{\textrm{int}}
\begin{document}
\date{}
\title{Compact $3$-manifolds via 4-colored graphs}

\bigskip

 \author[*]{Paola CRISTOFORI}
 \author[**]{Michele MULAZZANI}
\affil[*]{Dipartimento di Scienze Fisiche, Informatiche e Matematiche, Universit\`a di Modena e Reggio Emilia} \affil[**]{Dipartimento di Matematica, Universit\`a di Bologna}
\renewcommand\Authands{ and }

\maketitle

\begin{abstract}
We introduce a representation of compact 3-manifolds without spherical
boundary components via (regular) 4-colored graphs, which turns out to be very
convenient for computer aided study and tabulation. Our construction is a
direct generalization of the one given in the eighties
by S. Lins for closed 3-manifolds, which is in turn dual to the earlier construction introduced by Pezzana's school in Modena.

In this context we establish some results concerning fundamental groups,
connected sums, moves between graphs representing the same manifold,
Heegaard genus and complexity, as well as an enumeration and
classification of compact 3-manifolds representable by graphs with few vertices ($\le 6$ in the non-orientable case and $\le 8$ in the orientable one).
\end{abstract}
\bigskip

\small{

\thanks{

{\it 2010 Mathematics Subject Classification:} Primary 57M27,
57N10. Secondary 57M15.

\smallskip

{\it Key words and phrases:} 3-manifolds, Heegaard splittings, Heegaard
diagrams, colored graphs, complexity} }

\bigskip
\bigskip

\section{Introduction and preliminaries}
\label{intro}

The representation of closed 3-manifolds by 4-colored graphs has
been independently introduced by S. Lins and by Pezzana's research  group in Modena (see \cite{[FGG]} and \cite{[Li]}), by using dual constructions. A 4-colored graph is a regular edge-colored graph of valence 4, which represents a closed 3-manifold iff it satisfies certain combinatorial conditions.

The extension of the representation to
3-manifolds with boundary was performed by C. Gagliardi in \cite{[G$_0$]}
by using a slightly different class of colored graphs satisfying a notion of regularity weaker than the one required in the closed case. The study of this kind of representation has yielded several results especially with regard to the definition of combinatorial invariants and their relations with topological invariants of the represented manifolds (see \cite{[G$_4$]}, \cite{[CGG]}, \cite{[Cr$_1$]}, \cite{[CC$_2$]}). Unfortunately, Gagliardi's representation is not suitable for a satisfactory computer tabulation of non-closed 3-manifolds.

In this paper we show that any 4-colored graph, with no additional conditions, can represent a compact 3-manifold without spherical boundary components, and the whole class of such manifolds admits a representation of this type. As a consequence, an efficient computer aided tabulation of 3-manifolds with boundary can be performed by this tool.

The construction is described in Section \ref{construction} and a set of moves connecting graphs representing the same manifold is given in Section \ref{moves}. In the closed case, these moves have been proved to be sufficient to connect any two graphs representing the same manifolds (see \cite{[C$_1$]}). In the more general case of manifolds with boundary, this result is no longer true, at least when the boundary is not connected.

Examples of 4-colored graphs representing relevant classes of compact 3-ma\-ni\-folds, such as handlebodies (both orientable and non-orientable) and products of closed surfaces with the compact interval $I$, are given in Section \ref{examples}.

In Section \ref{sum} we establish the relation between the connected sum of graphs and the (possibly boundary) connected sum of the represented 3-manifolds. In Section \ref{fgroup} we associate to any 4-colored graph  a group which is strictly related to the fundamental group of the associated manifold and, therefore, it is a convenient tool for its direct computation (in many cases the two groups are in fact isomorphic).
Combinatorial invariants are defined and their relations with (generalized) Heegaard genus and Matveev complexity are discussed in Sections \ref{genus} and \ref{complexity} respectively.

The last section presents some computational results, obtained by means of
a C++ program, in terms of the number of non-isomorphic graphs
representing compact 3-manifolds with non-empty boundary, up to 12 vertices, and the number of
compact 3-manifolds admitting a graph representation up to 8 vertices in
the orientable case and up to 6 vertices in the non-orientable one.

\bigskip

In the following we fix some notations and recall some definitions and results about Heegaard splittings/diagrams which will be largely used throughout the paper.

Let  $S_g$ be a closed, connected
surface of genus $g$ either orientable (with $g\ge 0$) or non-orientable (with $g\ge 1$).
A \textit{system of curves} on $S_g$ is a (possibly empty) set of
simple closed orientation-preserving\footnote{This means that each
curve $\g_i$ has an annular regular neighborhood, as it always
happens if $S_g$ is an orientable surface.} curves $\C=\{\g_1,\ldots,\g_k\}$ on $S_g$ such that $\g_i \cap \g_j = \emptyset$, for $1\le i\neq j\le k$. Moreover, we denote by 
$V(\C)$ the set of connected components of the surface obtained by
cutting $S_g$ along the curves of $\C$. The system $\C$ is said
to be \textit{proper} if all elements of $V(\C)$ have genus zero,
and \textit{reduced} if either $\vert V(\C)\vert =1$ or no element
of $V(\C)$ has genus zero.

Note that a proper reduced system of curves on $S_g$ contains exactly $g$
curves in the orientable case and $g/2$ curves in the non-orientable one, with $g$ even. Of course, in the non-orientable case no proper system can exists when $g$ is odd. In the following $\C$ will be also considered as a subspace of $S_g$ in the obvious sense.

A \textit{compression body} $K_g$ of genus $g$ is a 3-manifold
with boundary obtained from $S_g\times I$, where $I=[0,1]$, by attaching a
finite set of 2-handles along a system of curves
(called \textit{attaching circles}) on $S_g\times\{0\}$ and
filling in with balls all the spherical boundary components of the
resulting manifold, except for $S_g\times\{1\}$ when $g=0.$ The set
$\partial_+ K_g=S_g\times\{1\}$ is called the \textit{positive}
boundary of $K_g$, while $\partial_- K_g =
\partial K_g-\partial_+ K_g$ is the \textit{negative}
boundary of $K_g$. Notice that a compression body is a handlebody
if an only if $\partial_- K_g = \emptyset$ (i.e., the system of
the attaching circles on $S_g\times\{0\}$ is proper). Obviously
homeomorphic compression bodies  can be obtained via (infinitely
many) non isotopic systems of attaching circles. Moreover, any non-reduced system of curves 
properly contains at least a reduced one inducing the same compression body. Operations of reduction correspond to elimination of complementary 2- and 3-handles.

Let $M$ be  a compact, connected 3-manifold without spherical
boundary components. A \textit{Heegaard surface} for $M$ is a
closed surface $S_g$ embedded in $M$ such that $M-S_g$ consists of
two components whose closures $K'$ and $K''$ are homeomorphic to
genus $g$ compression bodies.
The triple $(S_g, K',K'')$ is called a \textit{generalized Heegaard
splitting} of genus $g$ of $M$. It is a well-known fact that each
compact connected 3-manifold without spherical boundary components
admits a Heegaard splitting, and at least one of the two
compression bodies can be assumed to be a handlebody (in this case the splitting is simply called \textit{Heegaard splitting}).

Since two compact 3-manifolds are homeomorphic if and
only if (i) they have the same number of spherical boundary components
and (ii) they are homeomorphic after capping off by balls these
components, there is no loss of generality in
studying compact 3-manifolds without spherical boundary
components.

On the other hand, a triple $\mathcal D=(S_g, \C',\C'')$, where $\C'$ and
$\C''$ are two systems of curves on $S_g$, such that they
intersect transversally, uniquely determines a 3-manifold $M_{\mathcal D}$
corresponding to the Heegaard splitting $(S_g, K',K'')$, where
$K'$ and $K''$ are respectively the compression bodies whose
attaching circles correspond to the curves in the two systems.
Such a triple is called a \textit{generalized Heegaard diagram}
for $M_{\mathcal D}$.
In the case of a generalized Heegaard diagram $\mathcal D$ of a closed
3-manifold, both systems of curves are obviously proper; if they
are also reduced, $\mathcal D$ is simply a Heegaard diagram in the
classical sense (see \cite{[He]}).

The minimum $g$ such that a manifold $M$ admits a generalized Heegaard splitting (resp. a Heegaard splitting) of genus $g$ is called the \textit{generalized Heegaard genus} (resp. the \textit{Heegaard genus} of $M$), denoted by $\HH(M)$ (resp. by $\mathcal H(M)$). Of course the two notions coincide in the case of connected boundary.
The only 3-manifold of (generalized) Heegaard genus zero is the 3-sphere, possibly with some deleted balls. 
Examples of compact non-closed 3-manifold of generalized Heegaard genus one are $\mathbf S^1\times \mathbf S^1\times I$ and the orientable handlebody of genus one $H_1$. Of course $\HH(M)\le\mathcal H(M)$, for every manifold $M$ and it is easy to find examples where the two genera differ: for example $\HH(\mathbf S^1\times\mathbf S^1\times I)=1$ by construction but $\mathcal H(\mathbf S^1\times\mathbf S^1\times I)>1$ since the Heegaard genus of an orientable manifold can not be less than the sum of the genera of its boundary components. 

For general PL-topology and elementary notions about graphs and
embeddings, we refer to \cite{[HW]} and \cite{[Wh]} respectively.

\section{Construction}\label{construction}

Let $\G$ be a finite connected graph which is 4-regular (i.e., any vertex has valence four), possibly with multiple edges but with no loops.  A map $\g:E(\G)\to \D=\{0,1,2,3\}$ is called a \textit{$4$-coloration} of $\G$ if adjacent edges have different colors. An edge of $\G$ colored by $c\in\D$ is also called a \textit{$c$-edge}.

A \textit{$4$-colored graph} is a 4-regular graph equipped with a 4-coloration.  It is easy to see that 4-colored graphs have even order, but not all 4-regular graphs of even order can be 4-colored. Easy examples of 4-colored graphs are the graph of order two and the complete bipartite graph $K_{4,4}$. Two 4-colored graphs $\G'$ and $\G''$, with coloration $\g'$ and $\g''$ respectively, are (color-)isomorphic if there exist a graph isomorphism $\phi$ between $\G'$ and $\G''$ and a permutation $\sigma$ of $\D$ such that $\g''\circ\phi=\sigma\circ\g'$.
If $\D'\subset\D$, any connected component of the subgraph $\G_{\D'}$ of $\G$
containing exactly all $c$-edges, for each $c\in\D'$, is called a
\textit{$\D'$-residue} as well as a \textit{$\vert\D'\vert$-residue}. Of course
0-residues are vertices, 1-residues are edges and 2-residues are
bicolored cycles with an even number of edges.

We can associate a compact connected 3-manifold $M_{\G}$
without spherical boundary components  to any 4-colored graph $\G$ via the following construction.

First of all consider $\G$ as a 1-dimensional cellular complex. By
attaching to $\G$ a disk for each 2-residue we obtain a
2-dimensional polyhedron $P_{\G}$, which is special in the sense of \cite{[Ma]}. The vertices (resp. points of the edges) of $\G$ have links in $P_{\G}$ homeomorphic to a circle with three radii (resp. with two radii). Each 3-residue of $\G$, with the relative associated disks is a
closed connected surface $S$. If $S$ is a 2-sphere the residue is called \textit{ordinary} and otherwise \textit{singular}.
By attaching a 3-ball to $S$ when the residue is ordinary, and just thickening it by attaching $S\times I$ along $S\times \{0\}$ when the residue is singular, we obtain a compact connected 3-manifold $M_{\G}$ with non-spherical boundary components.
We will say that $\G$ \textit{represents} $M_{\G}$. Obviously isomorphic 4-colored graphs represent homeomorphic 3-manifolds.

For closed 3-manifolds the construction reduces to the
one introduced by Lins (see \cite{[Li]}), and it is dual to the one
introduced by Pezzana and others (see \cite{[FGG]}). Pezzana's construction was also studied by Bracho and Montejano \cite{[BM]} in the more general context of ``colored complexes'', but still within the closed case. 
So, the novelty of our construction is that it works also in case of
3-manifolds with (non-spherical) boundary.
Actually, a graph representation for manifolds
with boundary has been introduced by Gagliardi in \cite{[G$_0$]},
by using colored graphs which are not 4-regular. So,
our idea is to give, for the whole
class of compact 3-manifolds, a unitary representation  by 4-colored graphs, which seems to be more efficient than Gagliardi's for a computer aided
tabulation and classification of 3-manifolds with boundary.

\begin{remark}\label{remHeegaard} Let $i,j\in\D$ be different colors and let $\D-\{i,j\}=\{h,k\}$, then by removing from $P_{\G}$ the interior of any disk bounded either by an $\{i,j\}$-residue or by an $\{h,k\}$-residue, we obtain a closed connected surface $S_{i,j}$ which is a Heegaard surface for $M_{\G}$. Moreover, both the $\{i,j\}$-residues and the $\{h,k\}$-residues are two systems of curves $\C'$ and $\C''$ on $S_{i,j}$ such that the triple $(S_{i,j}, \C',\C'')$ is a generalized Heegaard diagram of $M_{\G}$. So, any 4-colored graph $\G$ defines three different generalized Heegaard diagrams for $M_{\G}$.
\end{remark}

\begin{figure}
\centerline{\scalebox{0.4}{\includegraphics[angle=270]{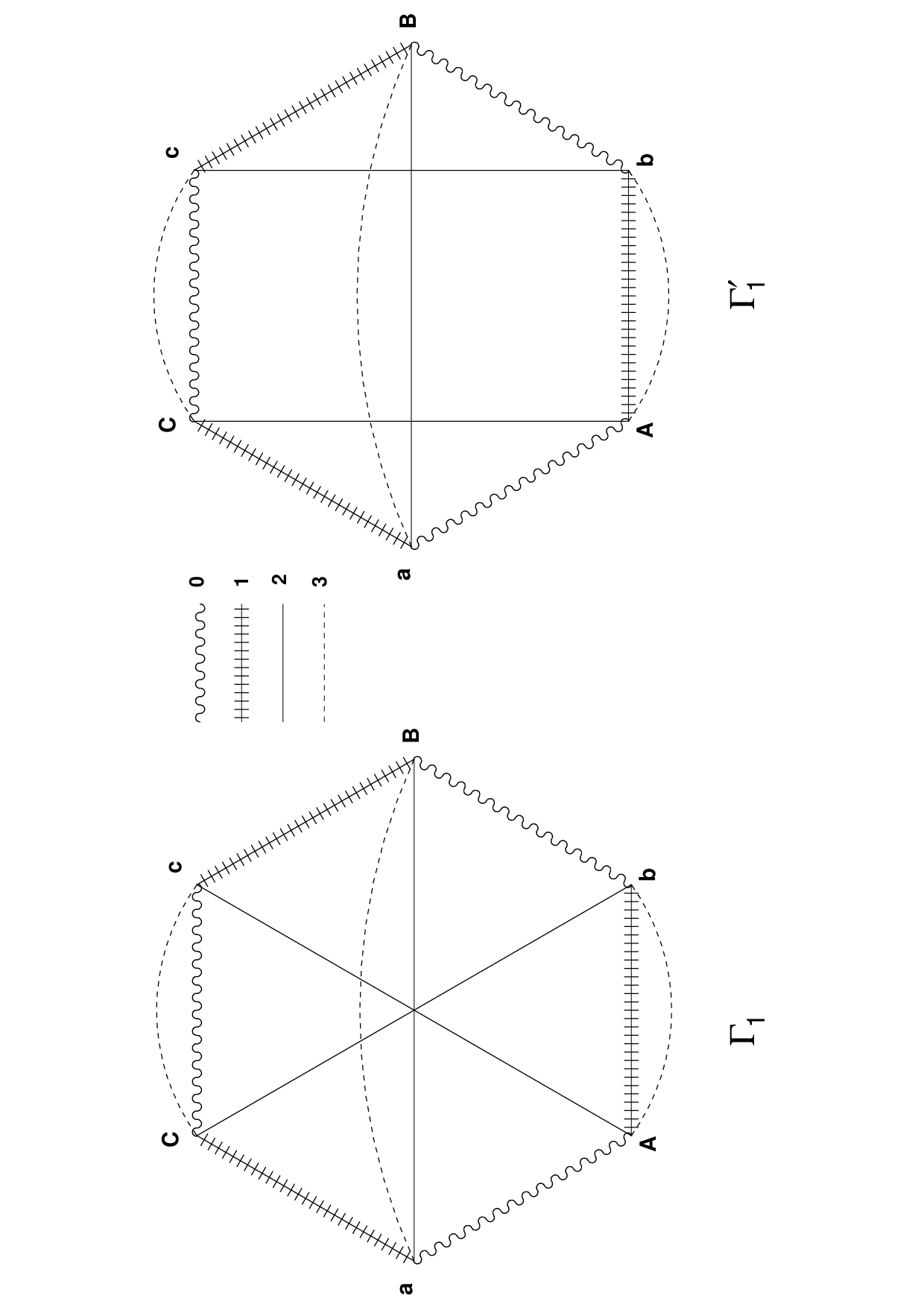}}}
\caption{The 4-colored graphs $\G_1$ and $\G'_1$, representing the genus one orientable and non-orientable handlebodies, respectively
}\label{Fig1}
\end{figure}

\begin{figure}
\centerline{\scalebox{0.5}{\includegraphics[angle=270]{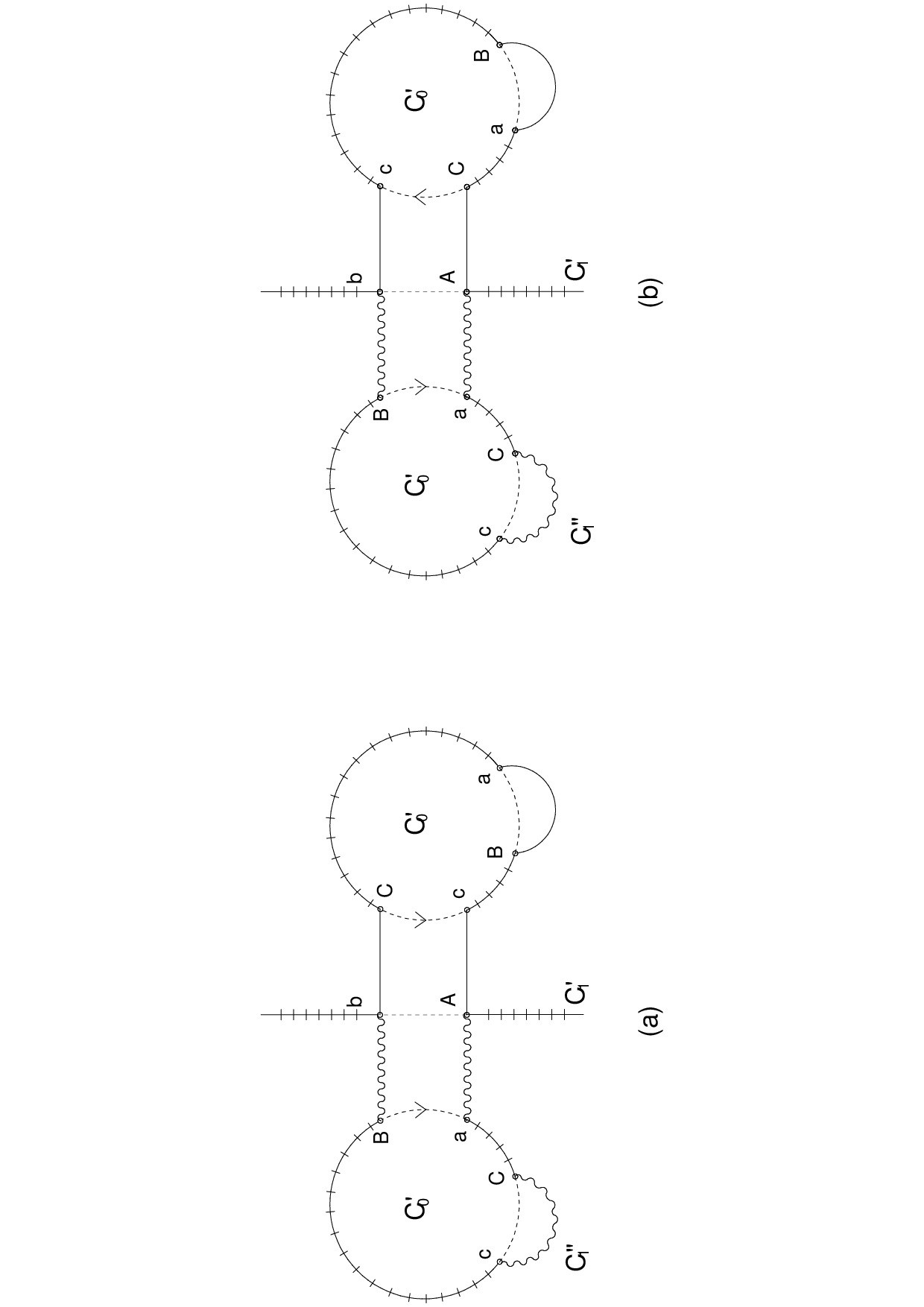}}}
\caption{(a) $\mathcal D_1$: generalized Heegaard diagram of  $M_{\G_1}$\ \ (b) $\mathcal D'_1$: generalized Heegaard diagram of $M_{\G'_1}$}\label{Fig2}
\end{figure}

\begin{example} Let us consider the 4-regular graphs $\G_1$ and $\G'_1$ of Figure \ref{Fig1}.
By Remark \ref{remHeegaard}, if we consider the pairs of colors $\{i,j\} = \{1,3\}$ and $\{h,k\} = \{0,2\}$, we obtain the generalized Heegaard diagrams $\mathcal D_1$ of $M_{\G_1}$ and $\mathcal D'_1$ of $M_{\G'_1}$ whose planar realizations are depicted in Figure \ref{Fig2}(a) and Figure \ref{Fig2}(b), respectively. Each diagram consists of the two systems of curves $\{C^\prime_0,C^\prime_1\}$ and $\{C^{\prime\prime}_1\}$ on the genus one surface $S_{1,3}$, which is orientable in the case of $\G_1$, non-orientable in the case of $\G'_1$.

Let $H_1$ be the genus one orientable (resp. non-orientable) handlebody whose 1-handles are attached along the curves $C^\prime_0$ and $C^\prime_1$ of $\mathcal D_1$ (resp. $\mathcal D'_1$). Then $M_{\G_1}$ (resp. $M_{\G'_1}$) is simply obtained from $H_1$ by adding a trivial 2-handle whose attaching curve is $C^{\prime\prime}_1$, i.e it is still homeomorphic to $H_1$.

Therefore $\G_1$ (resp. $\G'_1$) is a 4-colored graph representing the genus one orientable (resp. non-orientable) handlebody.
\end{example}

A 4-coloured graph, with 8 vertices, representing the genus one orientable handlebody was already constructed in \cite{[FG1]}.  Actually, it can also be obtained from the above graph $\G_1$, which is minimal with regard to the number of vertices, by ``adding a 2-dipole'' (see Section \ref{moves}).

The following result shows that 4-colored graphs are a representation tool for all compact 3-manifolds without spherical boundary components.

\begin{proposition}\label{representation} Any compact connected 3-manifold $M$ without spherical boundary components can be represented by a 4-colored graph.
\end{proposition}

\begin{proof} Let us consider a handle decomposition of $M$: the union of
the 0-handles and 1-handles gives a genus $g$ handlebody $H_g$,
which can be either orientable or non-orientable. Let  $D_0,\ldots,D_g$ be a system of $g+1$ pairwise disjoint disks properly
embedded in $H_g$, such that, by cutting along them, $H_g$ splits into
two balls $B'$ and $B''$. If $C'_i=\partial D_i$, for
$i=0,\ldots,g$, then $\C'=\{C'_0,\ldots,C'_g \}$ is a system of
curves on $\Sigma_g=\partial H_g$.\footnote{Note that $\Sigma_g=S_g$ in the orientable case and $\Sigma_g=S_{2g}$ in the non-orientable one.} Let $C''_j$, for $j=1,\ldots,m$, be
the attaching circles of the 2-handles. $\C''=\{C''_1,\ldots,C''_m \}$ is a system of curves on $\Sigma_g$, too.
Possibly by adding a trivial 2-handle, we can suppose that $m>0$.
Moreover, up to isotopy we can always suppose that each curve of
$\C''$ intersects transversally the curves of $\C'$; the graph
$\tilde\G=\C'\cup\C''$ is connected and cellularly embedded in $\Sigma_g$ (i.e.,
the regions of the embedding are disks). Now, let us take a set $\{N_1,\ldots ,N_m\}$ of pairwise disjoint regular
(closed) neighborhoods in $\Sigma_g$ of all curves of $\C''$, such
that $\partial N_j$ intersects transversally the curves of $\C'$ in
such a way that any component of $N_j\cap\C'$ contains exactly one
point of $\C'\cap\C''$. So, $\C=\{\partial
N_1,\ldots,\partial N_m\}$ is a system of $2m$ curves on $\Sigma_g$,
and the graph $\G=\C'\cup\C$ is a finite connected 4-regular graph
embedded on $\Sigma_g$. Let us color the arcs of $\G$ in the following
way: the arcs of the curves of $\C$ are colored by 2 if they
belong to $\partial B'$ and by 3 if they belong to $\partial B''$. Furthermore,
we color the arcs of the curves of $\C'$ by 0 if they belong to some
$N_j$ and by 1 otherwise. It is easy to see that the resulting
coloration is proper, and the 3-manifold $M_{\G}$ associated to $\G$ via
the previous construction is homeomorphic to $M$. 
\end{proof}

Some properties of the manifold $M_{\G}$ correspond to properties of the representing graph $\G$. For example:

\begin{proposition}\label{bip}
$M_{\G}$ is orientable if and
only if $\G$ is bipartite.
\end{proposition}

\begin{proof}
Of course $M_{\G}$ is orientable if and only if  $S_{i,j}$ is orientable, for arbitrarily fixed $i,j\in\D$, since $M_{\G}$ is obtained from  $S_{i,j}\times I$ by adding 2-handles and possibly 3-handles.
First of all let $S_{i,j}$ be orientable, and consider the induced orientation on its 2-cells, which are disks bounded by $\{l,m\}$-residues, with $l\in\{i,j\}$ and $m\in\{h,k\}=\D-\{i,j\}$. These orientations defines for each $v\in V(\G)$ a cyclic permutation of $\D$ which is  $\sigma=(i\, h\, j\, k)$ or its inverse, corresponding to the local orientation induced on the vertices. Since permutations of adjacent vertices are inverse each other, the graph $\G$ cannot have odd cycles and therefore it is bipartite.
Viceversa, let us suppose that $\G$ is bipartite. Then $V(\G)=V'\cup V''$, where $V',V''\neq\emptyset$ and $V'\cap V''=\emptyset$, such that any $e\in E(\G)$ connects a vertex of $V'$ with a vertex of $V''$. Let $R$ be a 2-cell of $S_{i,j}$, and suppose its boundary is a  $\{l,m\}$-residue such that $m=\sigma(l)$. 
Then orient $R$ in such a way that the induced orientation on its $m$-edges goes from vertices of $V'$ to vertices of $V''$. It is easy to see that the chosen orientations on the 2-cells of $S_{i,j}$ define a global orientation for the whole $S_{i,j}$. 
\end{proof}
 
Note that Propositions \ref{representation} and \ref{bip} are generalizations of analogous results obtained in \cite{[LM]}, \cite{[P]} and \cite{[BM]} for the closed case and different representation methods.

\medskip

For $i,j\in\D$, with $i\ne j$, we denote by $g_{i,j}$ the number of
$\{i,j\}$-residues of $\G$. Moreover, for each
$c\in\D$, the number of 3-residues corresponding to the colors of
$\hat c=\D-\{c\}$ will be denoted by $g_c$.
We say that $\G$ is \textit{$c$-contracted}
if either $g_c=1$ or all $\hat c$-residue are singular.
The graph is said to be \textit{contracted} if it is $c$-contracted for all $c\in\D$.
We will see in Section \ref{moves} that any 3-manifold can be represented by a contracted graph.

Moreover, boundary components of $M_{\G}$ correspond to colors
$c$ such that there exists at least a singular $\hat c$-residue. We call them \textit{singular colors}. By the
previous construction, which produces the graph $\G$ from a handle
decomposition of the manifold $M$, it is always possible to represent
a manifold by a 4-colored graph with at most one singular
color. In fact, colors 2 and 3 are not singular by construction and color 0 is non-singular since $\hat 0$-residues are attaching boundaries of the 2-handles.

A vertex of $\G$ is called a \textit{boundary vertex} if it belongs to at least one singular 3-residue, otherwise it is
called \textit{internal}. A boundary vertex is called of order $k$ if it belongs to exactly $k$ singular 3-residues. So $k\le 4$ and, as a convention, an
internal vertex is considered as a boundary vertex of order zero.
Next section shows that any 3-manifold can be represented by a graph with at least one internal vertex and, when the boundary is not empty, it can also be represented by a graph with at least a boundary vertex of order one.

\section{Moves}  \label{moves}

Given a 4-colored graph $\G$, an \textit{$h$-dipole} ($1\leq h\leq 3$) \textit{involving colors} $c_1,\ldots,c_h\in\D$ is a subgraph $\theta$ of $\G$ consisting of two vertices $v'$ and $v''$ joined by $h$ edges, colored by $c_1,\ldots,c_h$, such that $v'$ and $v''$ belong to different $\widehat{\{ c_1,\ldots, c_h\}}$-residues
of $\G$, where $\widehat{\{ c_1,\ldots, c_h\}}=\D -\{c_1,\ldots,c_h\}$. 

By \textit{cancelling} $\theta$ \textit{from} $\G$, we mean to remove $\theta$ and to paste together the hanging edges according to their colors, thus obtaining a new 4-colored graph $\G^\prime$. Conversely, $\G$ is said to be obtained from $\G^\prime$ by \textit{adding} $\theta$.
An $h$-dipole $\theta$ is called \textit{proper} if and only if $\G$ and $\G^\prime$ represent the same manifold.

\begin{proposition}\label{proper} An $h$-dipole $\theta$ of a $4$-colored graph $\G$ is proper if and only if one of the following conditions holds:
\begin{itemize} \item $h>1;$
\item $h=1$ and at least one of the $\hat c_1$-residues containing $v'$ and $v''$ is ordinary.
\end{itemize}
\end{proposition}

\begin{proof}
Let $\theta=\{v',v''\}$ be an $h$-dipole involving colors $\{c_1,\ldots,c_h\}$.

If $h=1$, let $X'$ (resp. $X''$) be the $\hat c_1$-residue of $\G$ containing $v'$ (resp. $v''$).
Cancelling $\theta$ corresponds to removing a tunnel $T$ in $M_{\G}$ connecting the two surfaces represented by $X'$ and $X''$ respectively.
The boundary of the tunnel is a cylinder $\mathbf S^1\times I$ composed by three bands $\alpha_1\times I,\alpha_2\times I,\alpha_3\times I$, whose sides $P_1\times I,P_2\times I,P_3\times I$ are portions of the $d$-edges ($d\in\hat c_1$) involved in the dipole.
It is obvious that, if the $\hat c_1$-residue containing a vertex of $\theta$, say $v'$, is ordinary, then $X'$ represents the boundary of a 3-ball and the cancellation of $\theta$ yields a new 4-colored graph still representing $M_{\G}$.

If $h=2$, then $\theta$ is a 2-component (i.e., a 2-cell) of the complex $P_\G$ which is a special spine of $M_{\G}$; then our claim follows from observing that the cancellation of $\theta$ is the inverse of the \textit{lune move} defined by Matveev and Piergallini on spines of 3-manifolds (see \cite{[M$_0$]} and \cite{[Pi]}).
Since $\theta$ is a dipole, the two $\widehat{\{c_1,c_2\}}$-residues containing $v'$ and $v''$ respectively are different 2-components of $P_\G$, so the cancellation of $\theta$ transforms $P_\G$ into another special spine of $M_\G$.

If $h=3$, then there exist two proper 1-dipoles, both involving the only color of $\widehat{\{c_1,c_2,c_3\}}$, adjacent to $v'$ and $v''$ respectively. The cancellation of $\theta$ is equivalent to the cancellation of one of these 1-dipoles. 
\end{proof}

As a consequence of the above proposition, we can obtain some useful properties.

\begin{corollary} Let $M$ be a $3$-manifold without spherical boundary components, then:

\noindent (i) $M$ can be represented by a contracted $4$-colored graph;

\noindent (ii)  $M$ can be represented by a $4$-colored graph with at least an internal vertex;

\noindent (iii) $M$ can be represented by a $4$-colored graph with at least a boundary vertex of order one, if $\partial M\neq\emptyset$.
\end{corollary}

\begin{proof} Let $\G$ be  a 4-colored graph representing $M$.

(i) If $\G$ is not contracted with respect to a color $c\in\D$, then there exists a 1-dipole $\theta$, involving color $c$, such that at least one of the two $\hat c$-residues containing $v'$ and $v''$ is ordinary. Hence $\theta$ is proper and by cancelling it we obtain a new 4-colored graph which still represents $M$.
A finite sequence of such cancellations of 1-dipoles obviously yields a contracted 4-colored graph representing $M$.

(ii, iii) If $\partial M=\emptyset$ there is nothing to prove. Let $v$ be a boundary vertex with minimal order $k>0$ and let $c\in\D$ be such that the $\hat c$-residue containing $v$ is singular. By adding a $3$-dipole along the $c$-edge
containing $v$ we obtain two new vertices, $v'$ and $v''$, which
are both singular of order $k-1$. In fact, the $\hat c$-residue
containing them is obviously a 2-sphere, and for each $d\in\hat c$ any $\hat d$-residue containing them is singular if and only if the $\hat d$-residue
containing $v$ in $\G$ is singular. So by induction on $k$ we can obtain an internal vertex (resp. a boundary vertex of order one) in not more than four steps (resp. three steps). 
\end{proof}

In the closed case all dipoles are proper and Casali proved in \cite{[C$_1$]} that dipole moves are sufficient to connect different 4-colored graphs representing the same manifold. A similar fact is not generally true in our context; in fact dipole moves do not change the singular colors of the involved graph, and for any 3-manifold with disconnected boundary it is easy to find two different graphs representing it, the first one with only a singular color and the second one with (at least) two singular colors.
In Section \ref{genus} another move will be introduced, the {\it bisection}, which changes the coloration but not the represented manifold. The problem whether, by adding bisection, it is possible to extend Casali's result, is currently under investigation. 

\medskip

The proof of Proposition \ref{proper}, case $h=1$, shows the effect of the cancellation of a non-proper 1-dipole.  More precisely, we have the following result:

\begin{proposition} \label{tunnel} Given a $4$-colored graph $\G$, let $t$ be a  
non-proper 1-dipole involving color $c$. If $\G'$ is the graph obtained from $\G$ by cancelling $t$, then $M_{\G'}$ is the manifold obtained from $M_{\G}$ by removing a tunnel along $t$ connecting the boundary components of $M_{\G}$ which correspond to the $\hat c$-residues involved in the dipole cancellation.
\end{proposition}

\section{Connected sums} \label{sum}

Suppose that $\G'$ and $\G''$ are two 4-colored graphs and let $v'\in V(\G')$ and $v''\in V(\G'')$. We can construct a new 4-colored graph $\G$,
called the \textit{connected sum} of $\G'$ and $\G''$ along $v'$ and $v''$, and
denoted by $\G=\G' _{v'}\#_{v''}\G''$, by removing the vertices
$v'$ and $v''$
and by welding the resulting hanging edges with the same color.

Obviously, the connected sum of two 4-colored graphs depends on the choice of the cancelled vertices. But when both vertices are internal or they are boundary vertices of the same order with respect to the same colors (the latter condition always holds, up to color permutation in one of the two graphs), then the connected sum of the graphs is strictly connected with the connected sum of the represented manifolds.

\begin{proposition} Let $\G',\G''$ be $4$-colored graphs and $v'\in V(\G'),v''\in V(\G'')$.

\noindent (i) if $v'$ and $v''$ are both internal vertices, then  $M_{\G' _{v'}\#_{v''} \G''}=M_{\G'}\#M_{\G''};$

\noindent (ii) if $v'$ and $v''$ are both boundary vertices of order one each belonging to a singular $\hat c$-residue, then  $M_{\G' _{v'}\#_{v''} \G''}=M_{\G'}\#_{\partial}M_{\G''}$, where the boundary connected sum of the manifolds is performed along the boundaries corresponding to the singular residues.
\end{proposition}

\begin{proof} (i) Since $V'$ and $V''$ are internal vertices we can perform the connected sum between $M_{\G'}$ and $M_{\G''}$ by erasing two balls $B'\subset M_{\G'}$ and $B''\subset M_{\G''}$ containing only the vertices $v'$ and $v''$ respectively and such that $L'=\partial B'\cap P_{\G'}$ and $L''=\partial B''\cap P_{\G''}$ are both homeomorphic to the 1-skeleton of a tetrahedron, where each edge belongs to a certain bicolored residue containing either $v'$ or $v''$. By gluing the two spheres $\partial B'$ and $\partial B''$ in such a way that $L'$ is glued with $L''$ coherently with the above 2-residues, we obtain a 4-colored graph $\G=\G'
_{v'}\#_{v''} \G''$ embedded in $M_{\G'}\#M_{\G''}$, and representing it in accordance with the main construction.

(ii) With regard to the case of boundary connected sum, if the singular $\hat c$-residue containing $v'$ (resp. $v''$)  is the surface $S'$ (resp. $S''$), then we can retract $S'\times I\subset M_{\G'}$ to $S'$ (resp. $S''\times I\subset M_{\G''}$ to $S''$) obtaining a new 3-manifold $M'$ homeomorphic to $M_{\G'}$ (resp. $M''$ homeomorphic to $M_{\G''}$) such that $v'\in\partial M'$ (resp. $v''\in\partial M''$). Now we can perform the boundary connected sum between $M'$ and $M''$ by erasing
two balls $B'\subset M'$ and $B''\subset M''$ containing only the vertices $v'$ and $v''$
respectively and such that $L'=\overline{\partial B'\cap \textrm{int} (M')}\cap P_{\G'}$ (resp. $L''=\overline{\partial B''\cap\textrm{int}(M'')}\cap P_{\G''}$)
is homeomorphic to the 1-skeleton of a tetrahedron, with three edges belonging to $\partial M'$ (resp. $\partial M''$)  and corresponding to a $\{i,c\}$-residue, $i\in\hat c$, containing $v'$ (resp. $v''$), and the other three edges not containing $v'$ (resp. $v''$) and corresponding to the other $\{i,j\}$-residues, $j\in\hat c$.
By gluing the two emispheres $E'=\overline{\partial B'\cap \textrm{int}(M')}$
and $E''=\overline{\partial B''\cap \textrm{int}(M'')}$
in such a way that $L'$ is glued with $L''$ coherently with the above 2-residues, we obtain a 4-colored graph $\G=\G' _{v'}\#_{v''}\G''$ embedded in $M=M'\#_{\partial}M''$. By gluing $S\times I$ to the $\hat c$-residue $S=S'\# S''$ we obtain the manifold $M_{\G}$ which is obviously homeomorphic to $M$. 
\end{proof}

When the order of the boundary vertices involved in the connected sum is greater than one, more complicated topological facts occur. If $M',M''$ are two manifolds with at least two boundary components $B'_1,B'_2\subset M'$ and $B''_1,B''_2\subset M''$, we define the \textit{double boundary connected sum} of $M'$ and $M''$ as the manifold $M=M'\#_{\partial\partial} M''$ obtained by removing a tunnel from $M'$ and $M''$ connecting $B'_1$ with $B'_2$ and $B''_1$ with $B''_2$ respectively, and adding a "holed" 1-handle $\mathbf S^1\times I\times I$, in such a way that $\mathbf S^1\times I \times \{0\}$ is attached to $B'_1$ and $\mathbf S^1\times I \times \{1\}$ is attached to $B''_1$ as in Figure \ref{Fig3}. An example of such operation is the  double boundary connected sum of $S_{g'}\times I$ with $S_{g''}\times I$, performed by choosing $\{P'\}\times I$ and $\{P''\}\times I$ respectively as tunnels, for any $P'\in S_{g'}$ and $P''\in S_{g''}$. It is easy to see that, if both $S_{g'}$ and $S_{g''}$ are orientable, the sum is homeomorphic to $S_{g'+g''}\times I$.

Of course, the double boundary connected sum in general depends on the choice of tunnels, but in the next proposition tunnels are trivial, and the sum is uniquely defined, up to homeomorphism.

\begin{figure}
\centerline{\scalebox{0.3}{\includegraphics[angle=270]{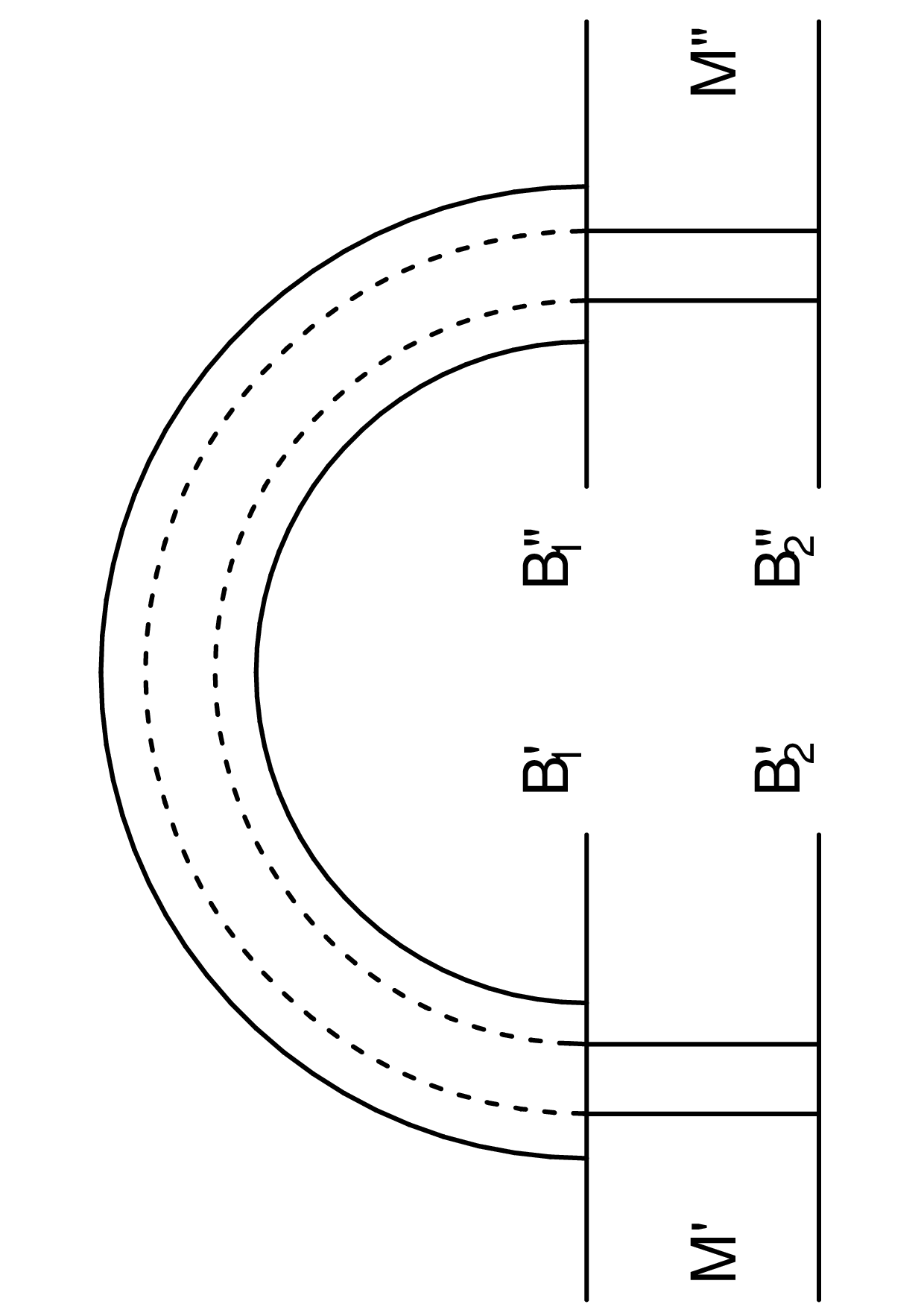}}}
\caption{Double boundary connected sum}\label{Fig3}
\end{figure}

\begin{proposition} Let $\G',\G''$ be $4$-colored graphs and $v'\in V(\G'),v''\in V(\G'')$.
If $v'$ and $v''$ are both boundary vertices of order two such that they belong to a singular $\hat c$-residue and to a singular $\hat d$-residue, with $c\neq d$, then
$M_{\G' _{v'}\#_{v''} \G''}=M_{\G'}\#_{\partial\partial} M_{\G''}$, where the double boundary connected sum of the manifolds is performed between the boundaries corresponding to the singular residues.
\end{proposition}

\begin{proof} Without loss of generality we can suppose $c=0$ and $d=1$. Add a $3$-dipole to the $0$-edge containing $v'$, as well as to the $0$-edge containing $v''$, obtaining two new 4-colored graphs called again $\G'$ and $\G''$.
The new vertices $w',u'\in V(\G')$ and $w'',u''\in V(\G'')$ are boundary vertices of order one.
Performing $\G' _{w'}\#_{w''} \G''$ we obtain a graph $\G$ representing
$M=M_{\G'}\#_{\partial}M_{\G''}$, where the boundary connected sum is made by connecting by a 1-handle $Y$ the boundaries corresponding to the $\hat 1$-residues containing
$v'$ and $v''$ respectively. After that we remove the $3$-dipole
containing $u'$ and $u''$, obtaining a new graph $\G_1$ still representing $M$.
Now the vertices $v'$ and $v''$ are boundary vertices of order two (with respect to colors 0 and 1), and they are connected by a $0$-edge, which is a non-proper 1-dipole, and which is the core of the 1-handle $Y$.
The result is achieved by applying Lemma \ref{tunnel}, using as tunnel a regular neighborhood of $Y_1\cup Y_2$, where $Y_1=\{v'\}\times I\subset S_1\times I$,
$Y_2=\{v''\}\times I\subset S_2\times I$, where $S_1$ and $S_2$ are respectively the $\hat 0$-residue and $\hat 1$-residue containing $v'$ (see Figure \ref{Fig3}). 
\end{proof}

\begin{remark}Note that the graph $\G_1$ appearing in the proof of the above proposition can be simply obtained directly from $\G'$ and $\G''$ by ''switching'' the two 0-colored edges containing $v'$ and $v''$. More precisely, if we call $\bar v'$ (resp. $\bar v''$) the vertex of $\G'$ (resp. $\G''$) 0-adjacent to $v'$ (resp. $v''$), we join $v'$ with $v''$ and $\bar v'$ with $\bar v''$ by a 0-colored edge.
More generally, we can obtain a graph representing the boundary connected sum performed along the boundary components corresponding to two given $\hat c$-residues $X'\subset\G'$ and $X''\subset\G''$, by switching two $d$-colored edges $e'$ and $e''$ belonging to $X'$ and $X''$ respectively and such that for each $i\notin\{c,d\}$, $e'$ and $e''$ belong to ordinary $\hat i$-residues.
\end{remark}

\section{Basic examples}\label{examples}

Any color of a 4-colored graph $\G$ can be interpreted as a fixed point free involution on $V(\G)$. When $\G$ is bipartite with vertex bipartition $V'$ and $V''$, a color $c\in\D$ can also be interpreted as a bijection $f_c:V'\to V''$, and the maps $\f_1=f_1^{-1}\circ f_0$, $\f_2=f_2^{-1}\circ f_0$ and $\f_3=f_3^{-1}\circ f_0$ are permutations of $V'$ which completely determine $\G$, up to isomorphism.

\subsection{4-colored graphs representing $S_g\times I$}\label{example1}

Let $S_g$ be a closed orientable (resp. non-orientable) surface of
genus $g$ and let $\G$ be the 4-colored bipartite (resp. non-bipartite) graph with $p=2(2g+1)$ (resp. $p=2(g+1)$) vertices obtained from the standard 3-colored graph representing $S_g$ described in \cite{[G$_5$]},
by adding 3-edges parallel (i.e. having the same endpoints) to the 0-edges (see Figures \ref{Fig4} and \ref{Fig5}).

\begin{figure}
\centerline{\scalebox{0.48}{\includegraphics[angle=270]{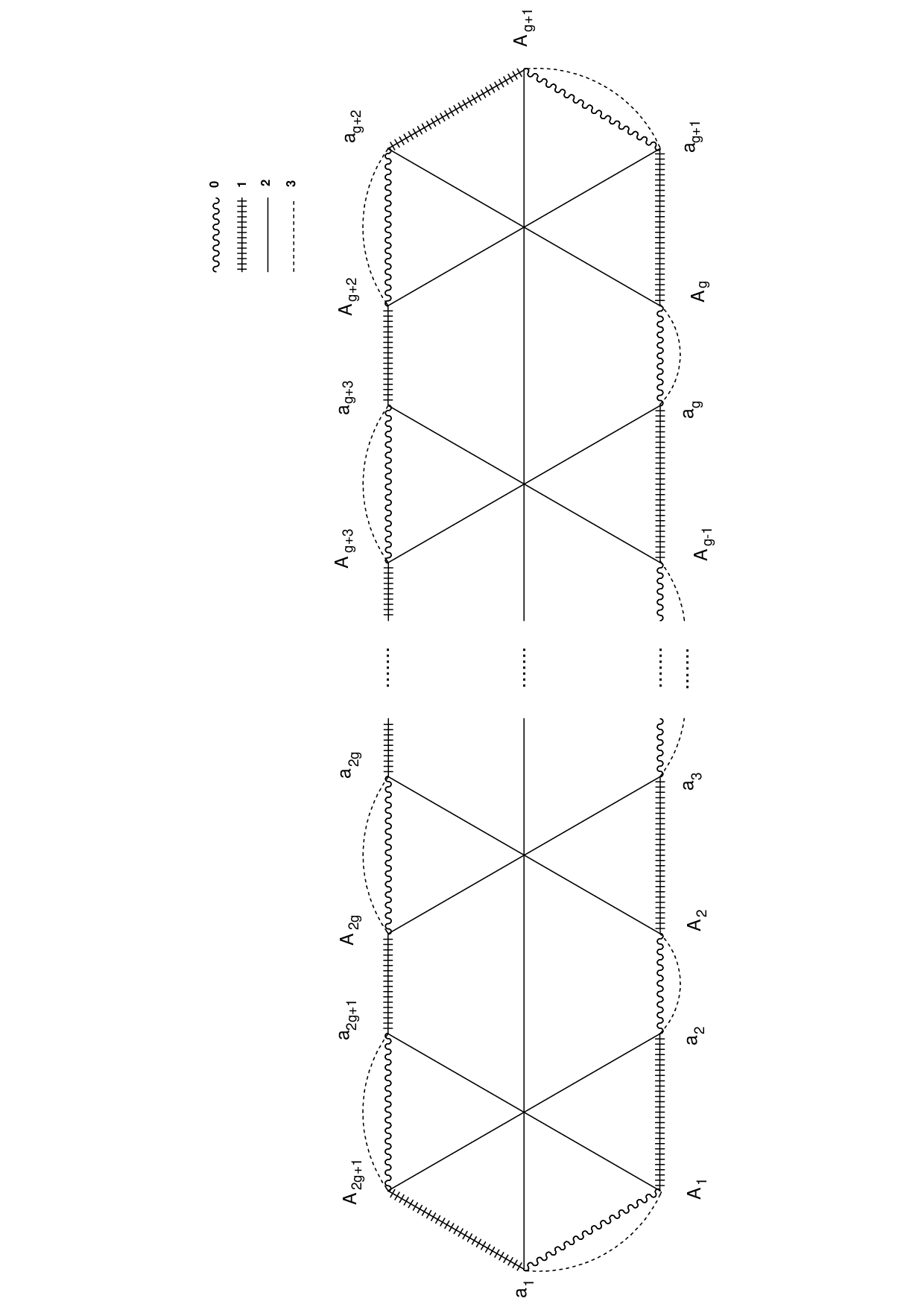}}}
\caption{4-colored graph representing $S_g\times I$, orientable case}\label{Fig4}
\end{figure}

\begin{figure}
\centerline{\scalebox{0.48}{\includegraphics[angle=270]{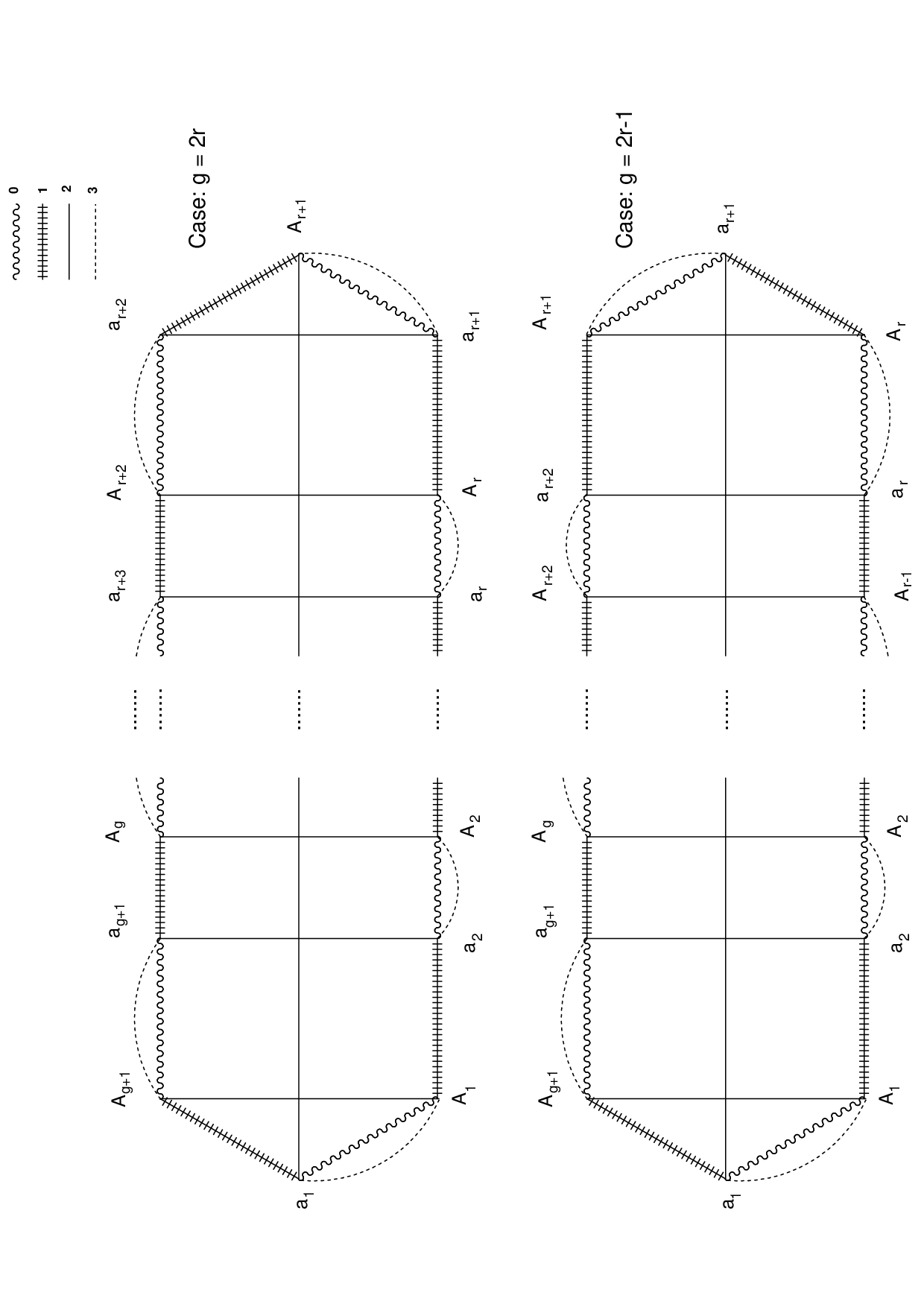}}}
\caption{4-colored graph representing $S_g\times I$, non-orientable case}\label{Fig5}
\end{figure}

Note that the singular colors of $\G$ are $0$ and $3$, while $1$ and $2$ are not singular. Moreover, $\G$ is contracted.

Let us consider the generalized Heegaard diagram for $M_\G$ associated to $\G$ and the pair $\{0,2\}$. The Euler characteristic of the Heegaard surface $S_{0,2}$ can be computed via the cellular decomposition induced by $\G$ on it. More precisely, we have:
$$\chi(S_{0,2})=p-2p+g_{0,1}+g_{1,2}+g_{2,3}+g_{0,3}=-p+1+1+1+\frac p2=3-\frac p2\ .$$
Hence, $S_{0,2}$ has genus $g$ and therefore it is homeomorphic to $S_g$.

The system of curves $\C^\prime$ (resp. $\C^{\prime\prime}$) on $S_{0,2}$
consisting of the $\{0,2\}$-residues (resp. $\{1,3\}$-residues) contains a
single curve $l$ (resp. $l^\prime$), and it is neither proper nor reduced.
In fact, $l$ (resp. $l^\prime$) bounds a disk on $S_{0,2}$; therefore it
can be removed from $\C^\prime$ (resp. $\C^{\prime\prime}$) without
changing the related compression body, which is homeomorphic to $S_g\times I$.
As a consequence, we obtain a reduced generalized Heegaard diagram for
$M_\G$, where both systems of curves on $S_{0,2}$ are empty. Hence $\G$
represents $S_g\times I$.

Note that the above 4-colored graph for the case $g>1$ can be also obtained from the one re\-pre\-senting $S_1\times I$  by performing iterated connected sums of this graph with itself, which correspond to double boundary connected sums as described in the previous section.

\subsection{4-colored graphs representing handlebodies}\label{example2}

A representation of the genus $g$ handlebody $H_g$ can be easily obtained starting from the one of $H_1$ given in Section \ref{construction} and then performing boundary connected sums.

In the orientable case, we start with the solid torus $H_1$, which can be represented by the bipartite 4-colored graph $\G_1$ depicted in Figure \ref{Fig1}: it has 6 vertices $a,b,c,A,B,C$ and its coloring can be described by $f_0(x)=X$, and the three permutations of $\{a,b,c\}$: $\f_1=(a\,b\,c)$, $\f_2=(a\,c\,b)$ and $\f_3=(a\,b)$. 
All vertices of $\G_1$ are boundary vertices of order one, corresponding to a singular $\hat 3$-residue, which is a torus.

By performing the connected sum of two copies of $\G_1$ along any pair of vertices, we obtain a bipartite graph $\G_2$ representing $H_2$ with 10 vertices $a,b,c,d,e,A,B,C,$ $D,E$ and defined by the permutations $\f_1=(a\,b\,c\,d\,e)$, $\f_2=(a\,e\,b\,d\,c)$ and $\f_3=(a\,b\,c)$ (see Figure \ref{Fig6}).

\begin{figure}
\centerline{\scalebox{0.3}{\includegraphics[angle=270]{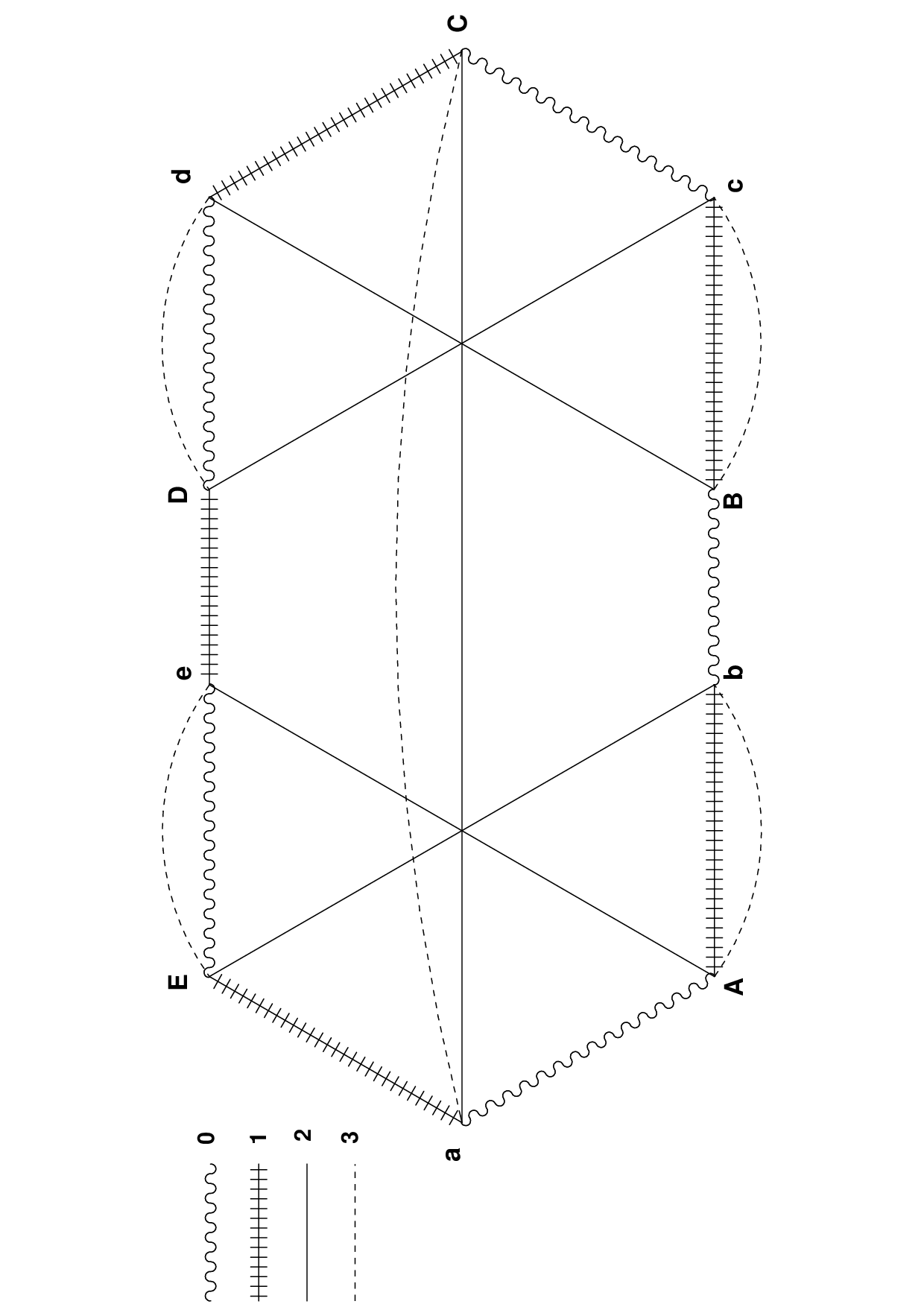}}}
\caption{4-colored graph representing $H_2$, orientable case}\label{Fig6}
\end{figure}

In order to get a 4-colored graph representing $H_g$, we iterate the boundary connected sum, taking care, for each $i=2,\ldots,g-1$, to perform it with respect to the vertex $a$ of $\G_1$ and the ``rightmost'' vertex of $\G_i$. As a consequence we obtain (see Figure \ref{Fig7}):

\begin{figure}
\centerline{\scalebox{0.5}{\includegraphics[angle=270]{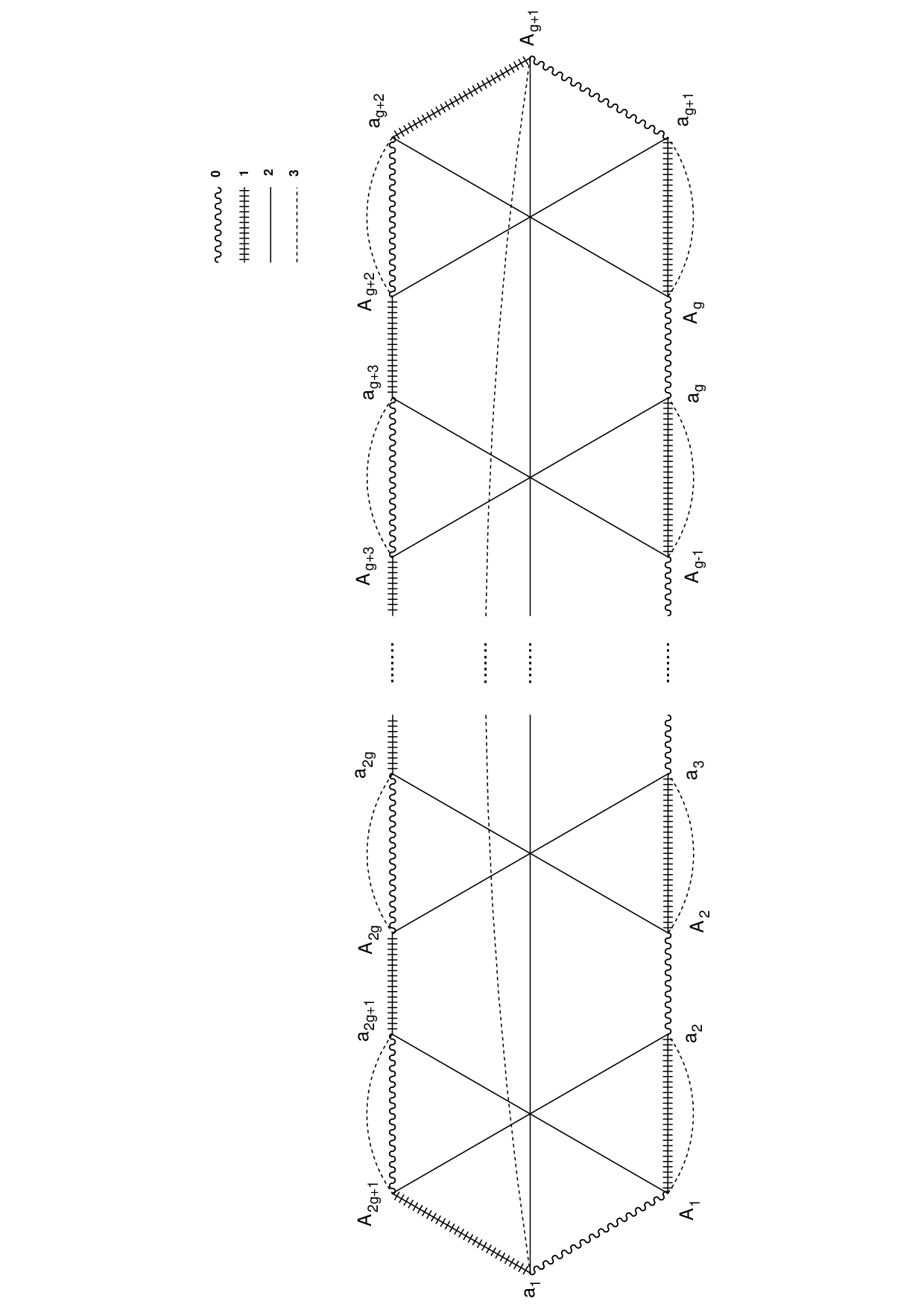}}}
\caption{4-colored graph representing $H_g$, orientable case}\label{Fig7}
\end{figure}

\begin{proposition} \label{orienthandle}
The genus $g$ orientable handlebody $H_g$ is represented by a bipartite $4$-colored graph $\G_g$ with $4g+2$ vertices $a_1,\ldots,a_{2g+1},A_1,\ldots,A_{2g+1}$, defined by the permutations
\begin{align*}
\f_1&=(a_1\,a_2\,\ldots\,a_{2g+1}),\, \f_2=(a_1\,a_{2g+1}\,a_2\,a_{2g}\,\ldots\,a_i\,a_{2g+2-i}\,\ldots\,a_g\,a_{g+2}\,a_{g+1}),
\\ \f_3&=(a_1\,a_2\,\ldots\,a_{g+1}).
\end{align*}
\end{proposition}

In the non-orientable case, we start with the solid Klein bottle $H_1$, which can be represented by the (non-bipartite) 4-colored graph $\G'_1$ of Figure \ref{Fig1}: it has 6 vertices $a,b,c,A,B,C$, the same $c$-edges ($c\in\{0,1,3\}$) as $\G_1$, and the 2-edges connecting $b$ with $c$, $A$ with $C$ and $a$ with $B$.
All vertices are boundary vertices of order one, corresponding to a singular $\hat 3$-residue which is a Klein bottle.

By performing the connected sum of two copies of $\G'_1$ along any pair of vertices, we obtain a graph $\G'_2$ representing $H_2$ with 10 vertices $a,b,c,d,e,A,B,C,D,E$ with the same $c$-edges ($c\in\{0,1,3\}$) as $\G_2$ and 2-edges connecting $A$ with $E$, $b$ with $e$, $B$ with $D$, $c$ with $d$ and $a$ with $C$ (see Figure \ref{Fig8}).

\begin{figure}
\centerline{\scalebox{0.35}{\includegraphics[angle=270]{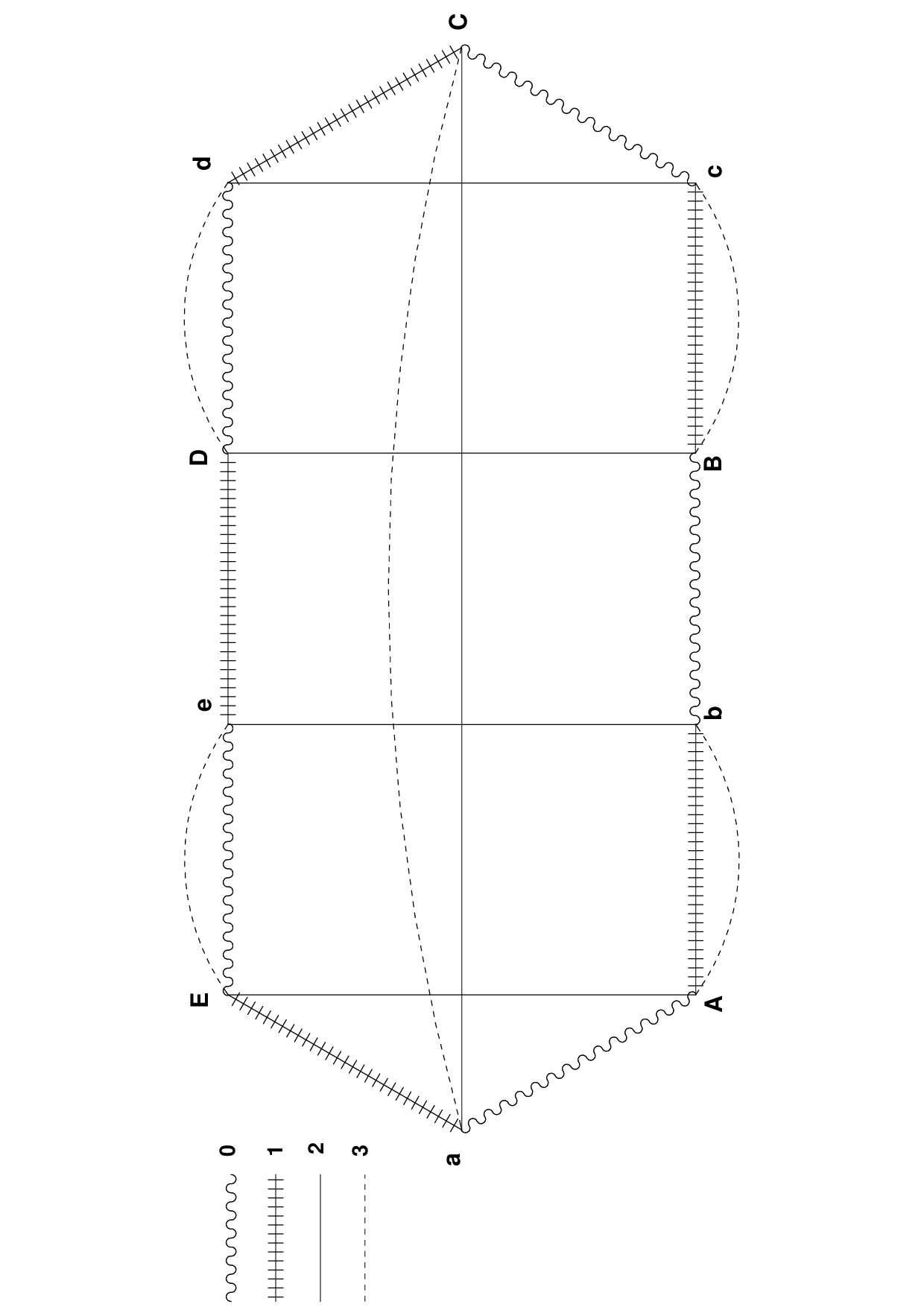}}}
\caption{4-colored graph representing $H_2$, non-orientable case}\label{Fig8}
\end{figure}

As in the orientable case, for each $i=2,\ldots,g-1$, we perform the boundary connected sum with respect to the vertex $a$ of $\G'_1$ and the ``rightmost'' vertex of $\G'_i$, thus obtaining (see Figure \ref{Fig9}):

\begin{figure}
\centerline{\scalebox{0.48}{\includegraphics[angle=270]{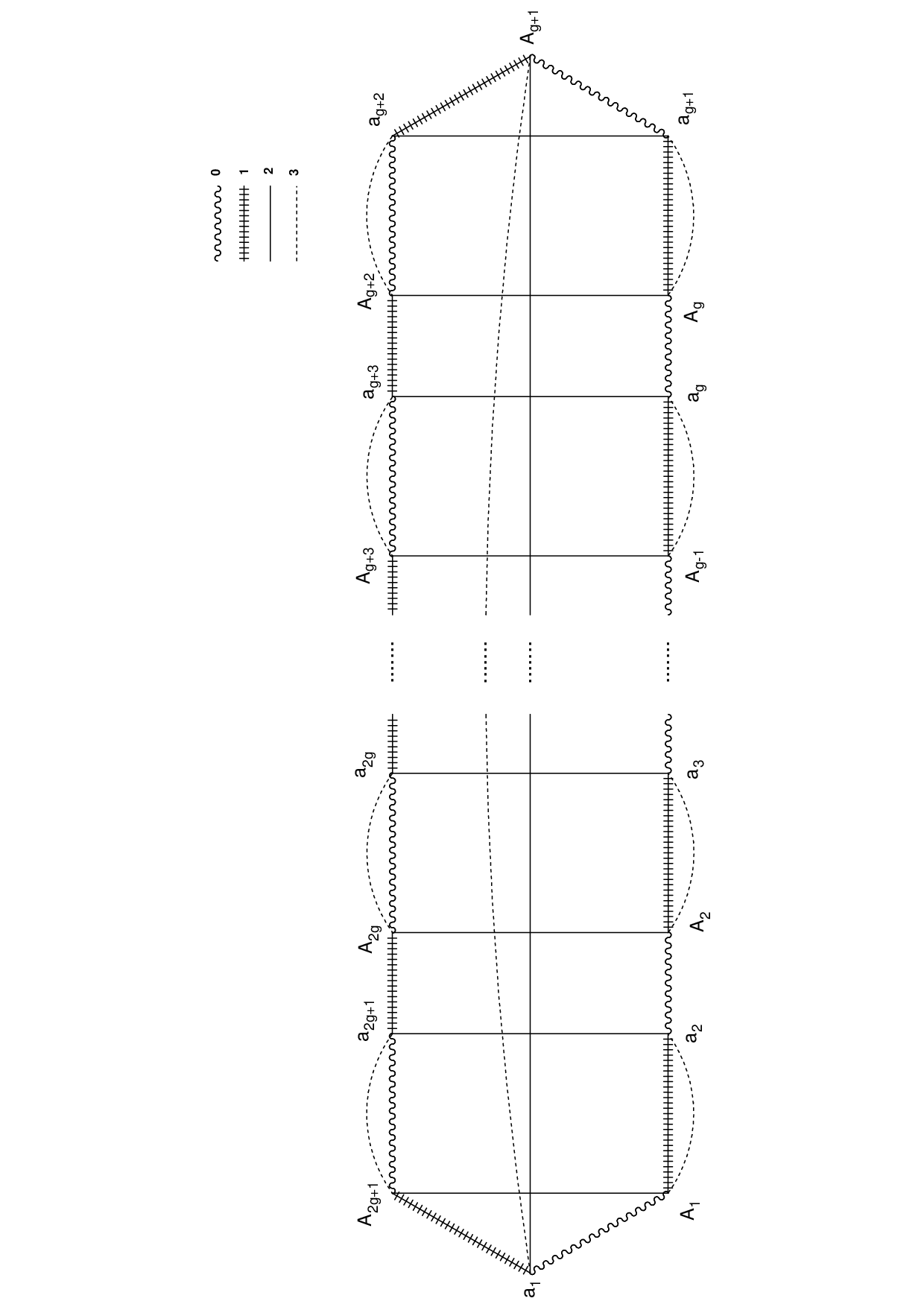}}}
\caption{4-colored graph representing $H_g$, non-orientable case}\label{Fig9}
\end{figure}

\begin{proposition}  \label{nonorienthandle}
The genus $g$ non-orientable handlebody $H_g$ is represented by a $4$-colored graph $\G'_g$ with $4g+2$ vertices $a_1,\ldots,a_{2g+1},A_1,\ldots,A_{2g+1}$,
with the same $c$-edges, $c\in\{0,1,3\}$, as the graph $\G_g$ of Proposition $\ref{orienthandle}$ and $2$-edges
connecting $a_1$ with $A_{g+1}$, $a_i$ with $a_{2g+3-i}$, for $i=2,\ldots,g+1$, and $A_i$ with $A_{2g+2-i}$, for $i=1,\ldots,g$.
\end{proposition}

\section{Fundamental group}\label{fgroup}

If $\G$ is a 4-colored graph, then the fundamental group of the
represented manifold $M_{\G}$ coincides with the fundamental group of the
associated 2-dimensional polyhedron $P_{\G}$, since $M_{\G}$ is obtained from $P_{\G}$ by adding  to $P_{\G}$ 3-balls and pieces which are retractable to $P_\G$.
Therefore, the computation of  $\pi_1(M_{\G})$ is a routine algebraic topology exercise: a finite presentation has generators corresponding to edges which are not in a fixed spanning tree of $\G$ and relators corresponding to all 2-residues of $\G$.

In several cases the group can be obtained by selecting a particular class of edges and 2-residues, as follows.
Let $c\in\D$ be any color, we define the \textit{$c$-group} of $\G$ as
the group $\pi(\G,c)$ generated by all $c$-edges (with a fixed arbitrary
orientation) and whose relators correspond to all $\{i,c\}$-residues, for each
$i\in \hat c$. Just give an orientation to any involved 2-residue,
choose a starting vertex and follow the cycle according
to orientation. The relator is obtained by taking the $c$-edges of
the cycle in the order they are reached in the path and with the
exponent $+1$ or $-1$ according to whether the orientation of the
edge is coherent or not with the one of the cycle.

In general $\pi(\G,c)$ depends on $c$, but when $c$ is a
non-singular color, the group is strictly connected with the fundamental group
of $M_{\G}$ (see \cite{[LM]} and \cite{[Gr]} for the case of closed manifolds).

\begin{proposition} \label{group} Let $\G$ be a $4$-colored graph, and $c$ be a non-singular
color for $\G$. Then $\pi_1(M_{\G})$ is a quotient of $\pi(\G,c)$,
obtained by adding to the relators a minimal set of $c$-edges which connect
$\G_{\hat c}$.
\end{proposition}

\begin{proof}
The group $\pi_1(M_{\G})=\pi_1(P_{\G})$ is isomorphic to the fundamental group of the space $X$ obtained by adding to $P_{\G}$ only the 3-balls corresponding to the $\hat c$-residues. The space $X$ has the same homotopy type of a 2-complex with 0-cells corresponding to the $\hat c$-residues, 1-cells corresponding to the $c$-edges of $\G$ and 2-cells corresponding to the $\{c,i\}$-residues of $\G$, for $i\in\hat c$. So the result is straightforward. 
\end{proof}

\begin{corollary} Let $\G$ be a $4$-colored graph, and $c$ be a non-singular
color for $\G$ such that $g_c=1$,
then $\pi_1(M_{\G})\cong \pi(\G,c).$
\end{corollary}

\section{Generalized regular genus}\label{genus}

A cellular embedding of a 4-colored graph $\G$ into a closed
surface $S$ is called \textit{regular} if there exists a cyclic permutation
$\varepsilon=(\varepsilon_0\,\varepsilon_1\,\varepsilon_2\,\varepsilon_3)$
of $\D$ such that any region of the embedding is bounded by a
$\{\varepsilon_i,\varepsilon_{i+1}\}$-residue of $\G$, for $i\in\mathbb
Z_4$.

We recall the following result from \cite{[G$_3$]}:

\begin{proposition}\label{reg-emb}
Let $\G$ be a bipartite (resp. non-bipartite) $4$-colored graph. Then:

\noindent (i) for any cyclic permutation $\varepsilon$ of $\D$, the graph $\G$ regularly embeds into a closed orientable (resp. non-orientable) surface $S_\varepsilon$ of Euler characteristic $$\chi(S_\varepsilon)=\sum_{i\in\mathbb Z_4}
g_{\varepsilon_i,\varepsilon_{i+1}}-p\ ,$$
where $p$ is the number of vertices of $\G$;

\noindent (ii) up to equivalence there exist exactly three regular embeddings of $\G$ into closed orientable (resp. non-orientable) surfaces, one for each cyclic permutation of $\D$, up to inversion, and there exist no regular embeddings of $\G$ into non-orientable (resp. orientable) surfaces.

\end{proposition}

\medskip

We denote by $\rho(\G)$ the minimum genus of $S_\varepsilon$ among all
cyclic permutations $\varepsilon$ of $\D$.

\begin{definition} \emph{Given  a compact 3-manifold $M$, the }
generalized regular genus \emph{of $M$ is:}

$$\overline{\mathcal G}(M)=\min\ \{ \rho(\G)\ |\ \G \text{\emph{
represents }} M\}.$$
\end{definition}

\medskip

As observed in Remark \ref{remHeegaard}, any 4-colored graph
defines three generalized Heegaard splittings of the represented 3-manifold,
which are induced by the choice of two colors of $\D$. Since
any two colors $i,j\in\D$ define (up to inversion) a cyclic
permutation of $\D$ where they are non-consecutive,
the following relation between regular embeddings of $\G$ and generalized Heegaard
splittings of $M_\G$ can be established.

\begin{proposition}\label{reg-H-spl}
Given a $4$-colored graph $\G$, for each unordered pair
$\{\varepsilon,\varepsilon^{-1}\}$, where $\varepsilon=(\varepsilon_0\,\varepsilon_1\,\varepsilon_2\,\varepsilon_3)$ is a cyclic
permutation of $\D$, there exists a generalized Heegaard splitting
$(S_\varepsilon,K'_\varepsilon,K''_\varepsilon)$ of $M_\G$, such that $\G$
regularly embeds into $S_\varepsilon$ and $K'_\varepsilon$ (resp.
$K''_\varepsilon$) is obtained from $S_\varepsilon\times I$ by attaching $2$-handles
on $S_\varepsilon\times\{0\}$ (resp. on $S_\varepsilon\times\{1\}$) along the
$\{\varepsilon_0,\varepsilon_2\}-$residues (resp.
$\{\varepsilon_1,\varepsilon_3\}-$residues).
Moreover,  $(S_\varepsilon,K'_\varepsilon,K''_\varepsilon)$ is a Heegaard
splitting (i.e. at least one of $K'_\varepsilon$ or $K''_\varepsilon$ is a
handlebody) if and only if there are two non-singular colors which are non-consecutive in $\varepsilon$.
\end{proposition}

As a consequence we have:

\begin{corollary} If $M$ is any compact $3$-manifold, then
$\HH(M) \le \overline{\mathcal G}(M)$.
\end{corollary}

Let us denote by $\mathcal G(M)$ the minimum $\rho(\G)$ where $\G$
is taken among all 4-colored graphs representing $M$ and having at most
one singular color.
It is proved in \cite{[CGG]} and \cite{[Cr$_1$]} that $\mathcal G(M)$
coincides with the \textit{regular genus} of $M$, as originally defined by
Gagliardi\footnote{Gagliardi's definition of regular genus of a 3-manifold
with non-empty boundary was given in \cite{[G$_4$]} through a
representation of the manifold by means of 4-\textit{colored graphs regular with respect to color 3}
(i.e., obtained from a 4-colored graph by deleting
some 3-edges). In fact, an analog of Proposition \ref{reg-emb}
holds for these graphs, by a suitable
adaptation of the concept of regular embedding into surfaces with
boundary.}, and that the regular genus equals the Heegaard genus (resp. is twice the
Heegaard genus) of an orientable (resp. non-orientable) 3-manifold.

Obviously $\overline{\mathcal G}(M)\leq\mathcal G(M)$ and there exist
3-manifolds for which the strict inequality holds, as
proved in the following proposition.

\begin{proposition} Let $S_g$ be a closed surface of genus $g$, then

$$\overline{\mathcal G}(S_g\times I)=g<\mathcal G(S_g\times I)=2g.$$
\end{proposition}

\noindent\begin{proof} The regular genus of $S_g\times I$ has been proved
to be $2g$ in \cite{[BR]}.
In order to prove the first equality, let $\G$ be the graph described in
Section \ref{example1} as representing $S_g\times I$ (see Figure \ref{Fig4} or
\ref{Fig5}, according to the orientability of $S_g$).
The genus $g$ Heegaard surface $S_{0,2}$ described in Section
\ref{example1}, is precisely the surface $S_\varepsilon$ (with
$\varepsilon = (0\ 1\ 2\ 3)$) into which $\G$ regularly embeds.
Hence $\overline{\mathcal G}(S_g\times I)\leq g$.
On the other hand, by Remark \ref{gen_bordo} below, we have
$\overline{\mathcal G}(S_g\times I)\geq g$.
This completes the proof. 
\end{proof}

However, if $M$ has connected boundary, then it follows easily from the
construction in Section~\ref{construction} that  $\overline{\mathcal
G}(M)=\mathcal G(M)$. More generally, we can state:

\begin{proposition} For each compact 3-manifold $M$, we have

$$\overline{\mathcal G}(M)=\min\ \{ \rho(\G)\ |\ \G \text{\emph{
represents }} M\text{\ \emph{and has at most two singular colors}}\}.$$
\end{proposition}

\medskip

The result is a direct consequence of Lemma \ref{gen_2col} below.
In order to prove it, we first introduce a suitable
transformation on 4-colored graphs (see \cite{[G$_1$]}).

Given $a,b\in\D$, let $X$ be a $\hat a$-residue, with
$V(X)=\{v_1,\ldots,v_r\}$, and $b\in \hat a$. A \textit{bisection of type} $(a,b)$ on $X$ is a transformation of $\G$ producing a new 4-colored graph
$\bar\G$ in the following way (see Figure \ref{Fig10} for a local
picture around a vertex of $X$):
\begin{itemize}
\item [-] add to $V(\G)$ a set $\bar V(X)=\{\bar v_1,\ldots,\bar v_r\}$
in bijective correspondence with $V(X)$;
\item [-] for each $i\in\D-\{a,b\}$, add a $i$-colored edge between two
vertices of $\bar V(X)$ if and only if the corresponding vertices of $V(X)$ are
$i$-adjacent;
\item [-] substitute all $b$-colored edges of $X$ by $a$-colored edges
with the same endpoints;
\item [-] for each $j=1,\ldots,r$, add a $b$-colored edge between $v_j$
and $\bar v_j$.
\end{itemize}

\begin{figure}
\centerline{\scalebox{0.3}{\includegraphics[angle=270]{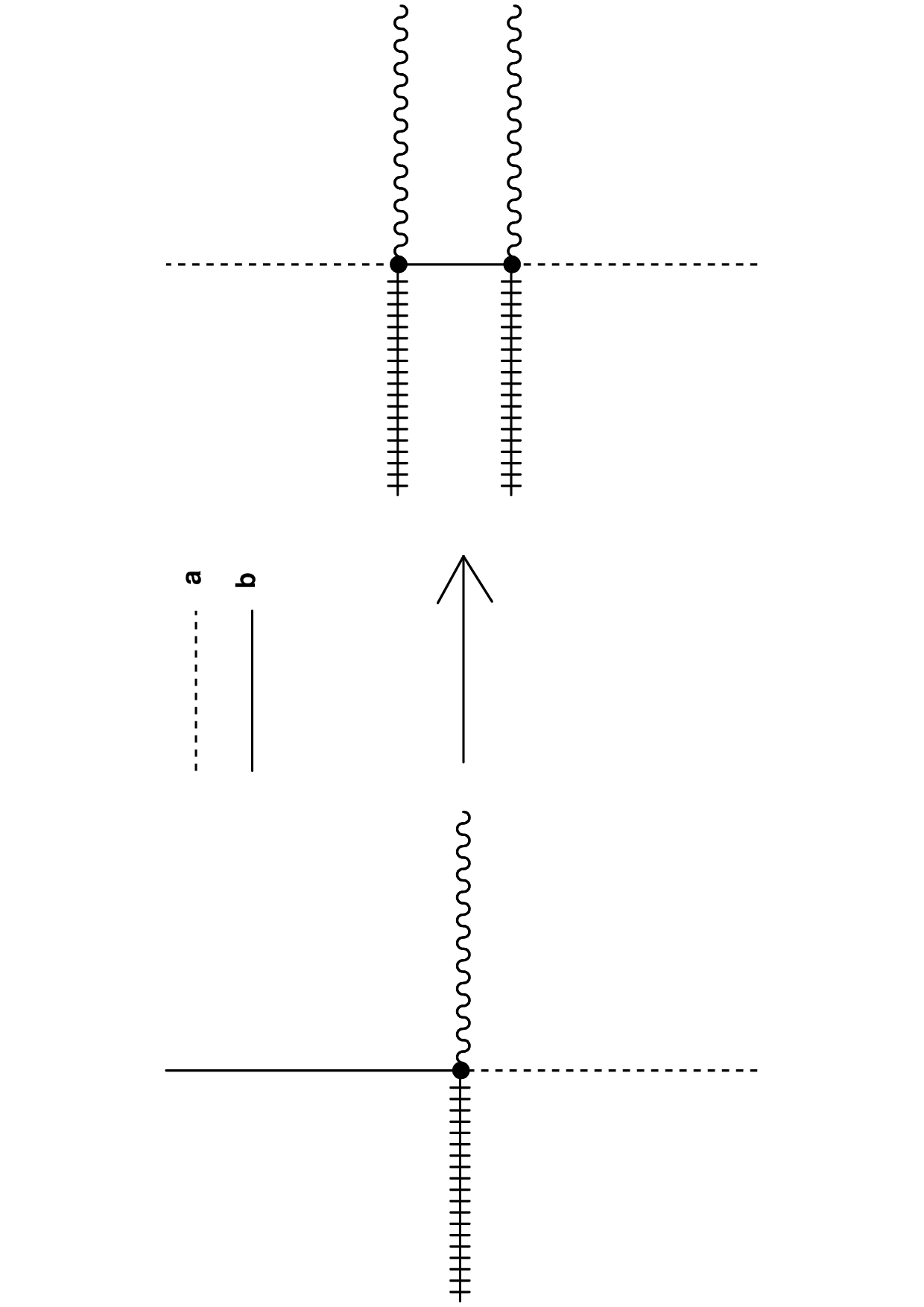}}}
\caption{A bisection of type $(a,b)$}\label{Fig10}
\end{figure}

\begin{lemma}\label{gen_2col}
Let $\G$ be a $4$-colored graph representing a compact $3$-manifold $M$, then
there
exists a $4$-colored graph $\G^\prime$ representing $M$, having at most two
singular colors and such that $\rho(\G)=\rho(\G^\prime)$.
\end{lemma}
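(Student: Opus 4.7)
The plan is to apply iterated bisections to reduce the number of singular colors of $\G$ until at most two remain, relying on the fact that each bisection preserves both the represented manifold and the regular genus.

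First, I would establish (or recall from \cite{[G$_1$]}) three key properties of a bisection of type $(a, b)$ performed on a singular $\hat a$-residue $X$: (i) the represented manifold is preserved, i.e., $M_{\bar\G} = M_\G$; (ii) the regular genus is preserved, i.e., $\rho(\bar\G) = \rho(\G)$; (iii) the $\hat a$-residue $X$ is replaced by a collection of ordinary (2-sphere) $\hat a$-residues, one for each $\{c, d\}$-residue of $X$ (where $\{c, d\} = \D \setminus \{a, b\}$), while the vertex set $V(X)$, equipped with its original $c$- and $d$-edges and the $b$-edges relabelled as $a$-edges, becomes a new $\hat b$-residue homeomorphic to $X$ and hence singular; all remaining residues are topologically unchanged (the mirror set $\bar V(X)$ simply replaces $V(X)$ inside the pre-existing $\hat b$-residues via the moved $a$-edges, without altering their homeomorphism type).

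With (i)--(iii) in hand, the main argument is short. If $\G$ already has at most two singular colors, take $\G' = \G$. Otherwise, choose two distinct singular colors $a$ and $b$ of $\G$ and apply a bisection of type $(a, b)$ successively on each singular $\hat a$-residue of $\G$. By (iii), each such bisection trades one singular $\hat a$-residue for ordinary ones and introduces a new singular $\hat b$-residue; since $b$ was already singular, no new singular color appears, and after all singular $\hat a$-residues have been processed, color $a$ is no longer singular. By (i)--(ii), the resulting graph still represents $M$ with the same regular genus. Iterating this scheme reduces the count of singular colors by one per round, so in at most two rounds we reach a graph $\G'$ satisfying the claim.

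The main obstacle is establishing (iii): one must analyze the local residue structure around $V(X) \cup \bar V(X)$ carefully. A new $\hat a$-residue arising from a $\{c, d\}$-residue $Y \subset X$ of length $2\ell$ is a prism-like assembly built from $Y$, its mirror copy $\bar Y \subset \bar V(X)$, the $2\ell$ bridging $b$-edges, the two polygonal $\{c,d\}$-faces bounded by $Y$ and $\bar Y$, and the $2\ell$ square $\{b,c\}$- and $\{b,d\}$-faces; the Euler characteristic count $V - E + F = 4\ell - 6\ell + (2\ell + 2) = 2$ confirms each is a 2-sphere. Verifying (ii) is the other delicate point: for every cyclic permutation $\varepsilon$ of $\D$, one must check $\Delta p = \Delta \sum_{i\in\mathbb{Z}_4} g_{\varepsilon_i,\varepsilon_{i+1}}$ under the bisection, so that the genus of $S_\varepsilon$ is unaffected; this is a direct count since the new 2-residues in the affected region (prism squares, mirrored polygons, and relabeled $\{a,c\}$- and $\{a,d\}$-residues) can be matched against the increase in $p$. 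Property (i) follows by interpreting the bisection as a controlled modification of the special polyhedron $P_\G$ that leaves $M_\G$ unchanged.
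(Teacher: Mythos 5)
Your overall strategy (use bisections to desingularize one color per round) is the same as the paper's, and your claims (i) and (iii) are correct --- the prism count and the description of the new singular $\hat b$-residue match what the paper asserts. The genuine gap is claim (ii): it is \emph{false} that a bisection leaves the genus of $S_\varepsilon$ unchanged for every cyclic permutation $\varepsilon$. Write $r=|V(X)|$ and let $x_{ij}$ be the number of $\{i,j\}$-residues of the singular $\hat a$-residue $X$, where $\D=\{a,b,c,d\}$. A bisection of type $(a,b)$ on $X$ changes the counts as follows: $g_{ab}\mapsto g_{ab}$, $g_{ac}\mapsto g_{ac}+x_{bc}$, $g_{ad}\mapsto g_{ad}+x_{bd}$, $g_{bc}\mapsto g_{bc}+\tfrac r2-x_{bc}$, $g_{bd}\mapsto g_{bd}+\tfrac r2-x_{bd}$, $g_{cd}\mapsto g_{cd}+x_{cd}$, and $p\mapsto p+r$. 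For $\varepsilon=(a\,c\,b\,d)$ (where $a,b$ are non-consecutive) the net change of $\chi(S_\varepsilon)$ is indeed $x_{bc}+(\tfrac r2-x_{bc})+(\tfrac r2-x_{bd})+x_{bd}-r=0$. But for $\varepsilon=(a\,b\,c\,d)$ the net change is
\[
\bigl(\tfrac r2-x_{bc}\bigr)+x_{cd}+x_{bd}-r \;=\; \chi(X)-2x_{bc}\;\le\;-1,
\]
since $X$ singular forces $\chi(X)\le 1$ while $x_{bc}\ge 1$; similarly for $(a\,b\,d\,c)$ with $x_{bd}$ in place of $x_{bc}$. So for the two permutations in which $a$ and $b$ are consecutive, the genus of the embedding surface strictly \emph{increases}.

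This kills your argument as written, because you choose $b$ subject only to being singular, with no condition tying $(a,b)$ to a permutation realizing $\rho(\G)$. Singular colors can perfectly well be consecutive in the minimizing permutation (e.g., in the graph of Section 5.1 representing $S_g\times I$, the singular colors are $0$ and $3$, which are consecutive in the minimizing permutation $(0\,1\,2\,3)$). If $a$ and $b$ are consecutive in every genus-minimizing permutation of $\G$, then after your bisections the two ``consecutive'' surfaces have strictly larger genus than before, and the remaining surface keeps its old genus, which is only known to satisfy $\ge\rho(\G)$; hence you obtain only $\rho(\G')\ge\rho(\G)$, possibly strictly, and the required equality fails. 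The paper's proof fixes a permutation $\varepsilon$ with $\mathrm{genus}(S_\varepsilon)=\rho(\G)$ and requires $b$ to be \emph{non-consecutive to $a$ in $\varepsilon$} (whether $b$ is singular is irrelevant); then $\mathrm{genus}(S_\varepsilon)$ is preserved, so $\rho$ cannot increase, and the two rounds of bisections of types $(a,b)$ and $(c,d)$ --- the two opposite pairs of $\varepsilon$ --- leave at most the colors $b$ and $d$ singular. Your scheme is repairable in the same spirit: when there are at least three singular colors, one of the two opposite pairs of $\varepsilon$ consists entirely of singular colors, and taking $\{a,b\}$ to be that pair makes your rounds work; but the non-consecutiveness condition must be stated and used --- it cannot be replaced by the condition that $b$ be singular, and the blanket claim (ii) must be weakened accordingly.
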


\begin{proof}
Suppose that $\G$ has more than two singular colors, and let $\varepsilon$ be a cyclic
permutation of $\D$ such that the surface $S_\varepsilon$ where $\G$
regularly embeds has genus $\rho(\G)$.
Let $X$ be a singular $\hat a$-residue of $\G$ and $\bar\G$
the graph obtained by performing a bisection of type
$(a,b)$ on $X$, where $b\in\hat a$ is a color which is not consecutive to $a$ in $\varepsilon$.
It is easy to see that $\bar\G$ has one more singular
$\hat b$-residue (which is isomorphic to $X$) and one less
singular $\hat a$-residue than $\G$, while the number of
singular residues of the other two colors does not change.
An easy computation shows that the embedding surface of $\bar\G$ with
respect to $\varepsilon$ has still genus $\rho(\G)$.

On the other hand, if we compare the generalized Heegaard splittings
associated to $\varepsilon$ of $\G$ and $\bar\G$ respectively, we see that
the systems of curves defined by the
$\{a,b\}$-residues are the same, while the system
of curves defined by the $\{c,d\}$-residues in
$\bar\G$, where $\{c,d\}=\D-\{a,b\}$, is obtained by simply doubling the corresponding system for $\G$.
Hence $\bar\G$ still represents $M$.
By performing bisections of type $(a,b)$ on all
singular  $\hat a$-residues of $\G$, we obtain a 4-colored
graph $\G^{\prime\prime}$ representing $M$, having the same genus as $\G$
and such that color $a$ is not singular.
Finally, by performing bisections of type $(c,d)$
on all singular  $\hat c$-residues of $\G^{\prime\prime}$, the
required 4-colored graph $\G^\prime$ is obtained. 
\end{proof}

\begin{remark}\label{gen_bordo}Given a compact 3-manifold $M$ with $h$
boundary components $B_1,\ldots,B_h$, let $g_i$ be the
genus  of $B_i$ if it is orientable and half of its genus otherwise.
It is known from \cite{[G$_4$]} that $\mathcal G(M)\geq\sum_{i=1}^h g_i$ (resp. $\mathcal
G(M)\geq 2\cdot\sum_{i=1}^h g_i$) if $M$ is orientable (resp. non-orientable).
Example \ref{example1} shows that this inequality does not hold for
the generalized regular genus. However, by Proposition \ref{reg-H-spl}, we
can state
\begin{align*}\overline{\mathcal G}(M)\geq\min\{\max\{\sum_{i\in I^\prime}
g_i,\sum_{j\in I^{\prime\prime}} g_j\}\ |&\ I^\prime,\
I^{\prime\prime}\subseteq\{1,\ldots,h\},\ I^\prime\cup
I^{\prime\prime}=\{1,\ldots,h\},\\ &\ I^\prime\cap
I^{\prime\prime}=\emptyset\}
\end{align*}
(resp.
\begin{align*}
\overline{\mathcal G}(M)\geq
2\cdot\min\{\max\{\sum_{i\in I^\prime} g_i,\sum_{j\in I^{\prime\prime}}
g_j\}\ |&\ I^\prime,\ I^{\prime\prime}\subseteq\{1,\ldots,h\},\
I^\prime\cup I^{\prime\prime}=\{1,\ldots,h\},\\ &\ I^\prime\cap
I^{\prime\prime}=\emptyset\}\ ).
\end{align*}

\end{remark}

\section{Complexity}\label{complexity}

The \textit{(Matveev) complexity} $c(M)$ of a compact 3-manifold $M$ was defined in \cite{[M$_1$]} as the minimal number of true vertices of an almost simple spine of $M$. In this section we will define a concept of complexity starting from generalized Heegaard splittings/diagrams of compact 3-manifolds, which will turn out to be an upper bound for the value of Matveev complexity.

Let us consider a non-reduced system of curves $\C$ on a surface $S_g$, and let $G(\C)$ be the graph which is dual to the one
determined by $\C$ on $S_g$ (i.e., the vertex-set of $G(\C)$ is in one-to-one correspondence with $V(\C)$ and the edges correspond to
curves of $\C$). Denote by $V^+(\C)$ the set of
vertices of $G(\C)$ corresponding to the components with positive genus.
Now let $T$ be a subgraph of $G(\C)$ satisfying the following conditions:

\begin{itemize}
\item [(i)] $T$ contains
all vertices of $G(\C)$,
\item  [(ii)] if $V^+(\C)=\emptyset$ then
$T$ is a tree of $G(\C)$,
\item  [(iii)] if $V^+(\C)\ne \emptyset$ then
each connected component of $T$ is a tree
containing exactly one vertex of $V^+(\C)$;
\end{itemize}
and let $\mathcal A(\C)$ be the set of all such subgraphs of $G(\C)$.
Observe that any choice of an element $T \in \mathcal A(\C)$ yields a reduced system on $S_g$ obtained by removing from $\C$  the curves corresponding to the edges of $T$ (i.e., removing complementary 2- and 3-handles from the complex).

Let  $\mathcal D=(S_g, \C ',\C '')$ be a generalized Heegaard diagram of a compact 3-manifold $M$ without spherical boundary components.
If at least one of the two systems of curves is non-reduced, we can associate to $\mathcal D$ several reduced diagrams, still representing $M$, obtained by reducing both systems of curves.
For a reduced generalized diagram $\mathcal{\tilde D}=(S_g, \mathcal{\tilde C}',\mathcal{\tilde C}'')$, we call \textit{singular vertices} of $\mathcal{\tilde D}$ the points of $\mathcal{\tilde C}'\cap\mathcal{\tilde C}''$ and denote their number by $n(\mathcal{\tilde D})$. Moreover, we call a \textit{region} of  $\mathcal{\tilde D}$ any connected component $\mathcal R$ of $S_g-(\mathcal{\tilde C}'\cup\mathcal{\tilde C}'')$ and denote by $n(\mathcal R)$ the number of singular vertices belonging to its boundary.

The \textit{modified Heegaard complexity} $\tilde c(\mathcal{\tilde D})$ of $\mathcal{\tilde D}$ is:

$$\tilde c(\mathcal{\tilde D})=\begin{cases}n(\mathcal{\tilde D})-\max\{n(\mathcal R)\ |\ \mathcal R\text{\ is a region of }\mathcal{\tilde D}\}\quad&\text{if either\ }\mathcal{\tilde C}'\text{\ or\ }\mathcal{\tilde C}''\text{\ is proper}\\ n(\mathcal{\tilde D})\quad&\text{otherwise}\end{cases}.$$

Then we define the \textit{modified Heegaard complexity} of a compact 3-manifold $M$ as:

$$\tilde c(M) = \min\,\{\tilde c(\mathcal{\tilde D}) \mid \mathcal{\tilde D}  \text{\ is a generalized\ reduced\ Heegaard \ diagram \  of \ } M\}.$$

In \cite{[CMV]}  modified Heegaard complexity was originally defined by means of (non-generalized) Heegaard splittings and in that setting it was proved to be an upper bound for the value of Matveev complexity.

We can now generalize that result.

\begin{proposition} For each compact 3-manifold without spherical boundary components $M$, we have
$$\tilde c(M)\leq c(M).$$
\end{proposition}

\begin{proof}
Let $\mathcal{\tilde D}=(S_g, \mathcal{\tilde C}',\mathcal{\tilde C}'')$ be a reduced generalized Heegaard diagram for $M$ such that $\tilde c(\mathcal{\tilde D})=\tilde c(M)$.
If $\mathcal{\tilde D}$ is a Heegaard diagram, i.e. one of the two systems of curves is proper, the statement was already proved in \cite[Proposition 2.3]{[CMV]}.
Suppose that neither $\mathcal{\tilde C}'$ nor $\mathcal{\tilde C}''$ are proper. Let $P$ be the 2-complex obtained by attaching to  $S_g$ the 2-cells which are the cores of the 2-handles corresponding to $\mathcal{\tilde C}'$ and $\mathcal{\tilde C}''$.
Then $M-P$ collapses to $\partial M$, i.e. $P$ is a spine of $M$. Moreover, $P$ is special and its true vertices are exactly the singular vertices of $\mathcal{\tilde D}$. Hence, $\tilde c(\mathcal{\tilde D})\leq c(M)$. 
\end{proof}

We recall that any 4-colored graph $\G$ representing a compact 3-manifold defines three generalized (non-reduced) Heegaard diagrams.
Therefore, by reducing the diagrams as described above, we can compute their modified Heegaard complexity and thus get upper bounds for Matveev complexity of the represented manifold.

Observe that, for any closed 3-manifold $M$, the complexity $\tilde c(M)$ is always realized by a Heegaard diagram associated to a 4-colored graph representing $M$, as proved in  \cite{[CCM]} (see also \cite{[C$_2$]} and \cite{[CC$_1$]}).

\section{Computational results}\label{computation}

The combinatorial nature of colored graphs makes them particularly suitable for computer manipulation. In particular, it is possible to generate catalogues of 4-colored graphs for increasing number of vertices, in order to analyze the represented compact 3-manifolds. In this context, we can obtain interesting manifolds even with a low number of vertices.

By the results of Section \ref{moves} , without loss of generality we can restrict the catalogues to contracted graphs with no 2-dipoles. Therefore, given a positive integer $p$, the catalogue of contracted 4-colored graphs with $2p$ vertices and no 2-dipoles is generated algorithmically in the following way. First of all the set $\mathcal S^{(2p)}$ of all (possibly disconnected) 3-colored graphs $\G$ with $2p$ vertices, such that either $\G$ is connected or each connected component of $\G$ represents a surface of positive genus is constructed. After that, the 3-edges are added to each graph of $\mathcal S^{(2p)}$ in all possible ways which give rise to a contract 4-colored graph without 2-dipoles. Moreover, since the Euler characteristic of the represented manifold can be easily computed directly through the graph, we can drop closed manifolds, since they have been already catalogued (see \cite{[BCrG$_1$]}, \cite{[CC]} and also \cite{[BCCGM]}).

\medskip

A 4-colored graph $\G$ is called $m$-bipartite if all $r$-residues are bipartite, for any $r\leq m\leq 4$, and there exists a non-bipartite $(m+1)$-residue when $m<4$. Obviously any $\G$ is at least $2$-bipartite. Note that, by construction and by Proposition \ref{bip}, $\G$ is $4$-bipartite (resp. $3$-bipartite) if and only if $M_\G$ is orientable (resp. non-orientable with orientable boundary).
The \textit{code} of a $m$-bipartite 4-colored graph $\G$ with $2p$ vertices is a ``string'' of length  $(7-m)p$ which
completely describes both combinatorial structure and coloration of $\G$ (see also
\cite{[CG]}). The importance of the code as a tool for representing 4-colored graphs relies on the following result:   

\begin{proposition} {\rm (\cite{[CG]})} Two $4$-colored graphs are color-isomorphic
if and only if they have the same code.\end{proposition}

As a consequence, by representing each colored graph by its code, we can
produce catalogues containing only graphs which are pairwise not color-isomorphic.

The following table shows the output data, up to 12 vertices, of a C++
program implementing the generating algorithm.

$C^{(2p)}$ (resp. $\tilde{C}^{(2p)}$) denotes the catalogue of bipartite (resp. 2-bipartite) contracted 4-colored graphs with $2p$ vertices and no 2-dipoles.

$C^{(2p)}_c$ (resp. $\tilde{C}^{(2p)}_c$) denotes the subset of $C^{(2p)}$ (resp. $\tilde{C}^{(2p)}$) consisting of the graphs with connected boundary and $C^{(2p)}_t$ (resp. $\tilde{C}^{(2p)}_t$) denotes the subset of $C^{(2p)}_c$ (resp. $\tilde{C}^{(2p)}_c$) consisting of those graphs having toric boundary. Each row of the table shows the cardinality of the corresponding set.

\begin{table}
\caption{\textbf{Catalogues up to 12 vertices.}}\label{table1}
 \centerline{ \begin{tabular}{|c||c|c|c|c|c|c|}
  \hline
   {\bf 2p } & 2 & 4 & 6 & 8 & 10 & 12\\
 \hline\hline \ & \ & \ & \ & \ & \ & \ \\
  {\bf $C^{(2p)}$} & 0 & 0 & 2 & 4 & 57 & 902\\ 
\hline \ & \ & \ & \ & \ & \ & \ \\
 {\bf $\tilde{C}^{(2p)}$} & 0 & 1 & 6 & 90 & 3967 & 395877\\ 
 \hline \ & \ & \ & \ & \ & \ & \ \\
  {\bf $C^{(2p)}_t/C^{(2p)}_c$} & 0/0 & 0/0 & 1/1 & 0/0 & 0/5 & 26/28\\
\hline \ & \ & \ & \ & \ & \ & \ \\
  {\bf $\tilde{C}^{(2p)}_t/\tilde{C}^{(2p)}_c$} & 0/0 & 0/0 & 0/1 & 0/0 & 0/10 & 24/73\\
   \hline
\end{tabular}}
\end{table}

With regard to the classification of the manifolds represented by the above graphs, we state some results concerning cases with few number of vertices.

\begin{proposition} There exist exactly seven non-closed compact non-orientable $3$-manifolds without spherical boundary components, which can be represented by a $4$-colored graph of order $\le 6$.
\end{proposition}

\begin{proof}
The unique element of $\tilde{C}^{(4)}$ is the graph already described in Section \ref{examples} representing $\mathbf {RP}^2\times I$.
The six elements of $\tilde{C}^{(6)}$ have all distinct boundaries, which are also different from $\mathbf {RP}^2\cup\mathbf {RP}^2\cong \partial(\mathbf {RP}^2\times I)$.

More precisely, one of the graphs in $\tilde{C}^{(6)}$ represents the genus one non-orientable handlebody (see Section \ref{example2}) and the other five have the following boundaries:
\begin{itemize}
\item [-] four Klein bottles;
\item [-] two Klein bottles;
\item [-] one Klein bottle and two projective planes;
\item [-] one torus, one Klein bottle and two projective planes;
\item [-] four projective planes.
\end{itemize} 
\end{proof}

\begin{proposition}\label{class_or} There exist exactly five non-closed compact orientable $3$-manifolds without spherical boundary components, which can be represented by a $4$-colored graph of order $\le 8$.
\end{proposition}

\begin{proof}
The two elements of $C^{(6)}$ are the graphs already described in Section \ref{examples} representing $\mathbf S^1\times\mathbf S^1\times I$ and the genus one orientable handlebody $H_1$ respectively.

We list below the four elements of $C^{(8)}$ by means of their codes. The vertex-set is $\{a,b,c,d,A,B,$ $C,D\}$ and the coloration is defined by $f_0(x)=X$ and $f_i(x)=Y$ $(0< i\leq 3$), where $x$ is the $j$-th letter of the alphabet and $Y$ is the $(4(i-1)+j)$-th letter in the string of the code $(1\leq j\leq 4)$.
\begin{eqnarray*}
\G_1 : DABCCDABBCDA\\
\G_2 : DABCDCABCADB\\
\G_3 : DABCDCABCBDA\\
\G_4 : DABCDCABCDBA
\end{eqnarray*}

The graphs $\G_1$ and $\G_3$ (resp. $\G_2$ and $\G_4$) have both boundary consisting of four (resp. three) tori.

The fundamental groups of $M_{\G_1}$ and $M_{\G_3}$ admit the following presentations: 
\begin{align*}
\pi_1(M_{\G_1})&=<x_1,x_2,x_3,x_4\mid x_1x_2x_1^{-1}x_2^{-1},\  x_3x_4x_3^{-1}x_4^{-1},\  x_3x_2x_4x_1x_3^{-1}x_2^{-1}x_4^{-1}x_1^{-1}>,\\
\pi_1(M_{\G_3})&=<x_1,x_2,x_3,x_4\mid x_1x_2x_1^{-1}x_2^{-1},\  x_3x_4x_3^{-1}x_4^{-1},\ x_2x_4x_2^{-1}x_4^{-1}>.
\end{align*}

Since the two groups have different number of subgroups with index $\le 6$, as checked by GAP program \cite{[GAP2013]}, the manifolds represented by $\G_1$ and $\G_3$  are distinct.

With regard to the manifolds represented by $\G_2$ and $\G_4$ (see Figure \ref{Fig11}), by choosing color $0$, which is the only non-singular color of these graphs, 
it is easy to see that both fundamental groups admit the presentation 
$$<\ x,y,z\mid xz = zx,\ yz=zy\ >,$$
and hence they are isomorphic to $(\mathbb Z *\mathbb Z)\times \mathbb Z$.

Since the above group is not a free product, the manifolds $M_{\G_2}$ and $M_{\G_4}$ are irreducible. 
A compact, orientable, irreducible 3-manifold with toroidal boundary whose fundamental group is a (non-trivial) direct product is homeomorphic to a trivial bundle over $\mathbf S^1$, where the fiber is a compact surface (see \cite{[E]}, \cite{[HeJ]} and also \cite{[He]}). Therefore, in our case, we have $M_{\G_2}\cong M_{\G_4}\cong \mathbf S^1\times F$, where $F$ is $\mathbf S^2$ with the interior of three disjoint disks deleted.  
\end{proof}

\medskip

\begin{figure}
\centerline{\scalebox{0.4}{\includegraphics[angle=270]{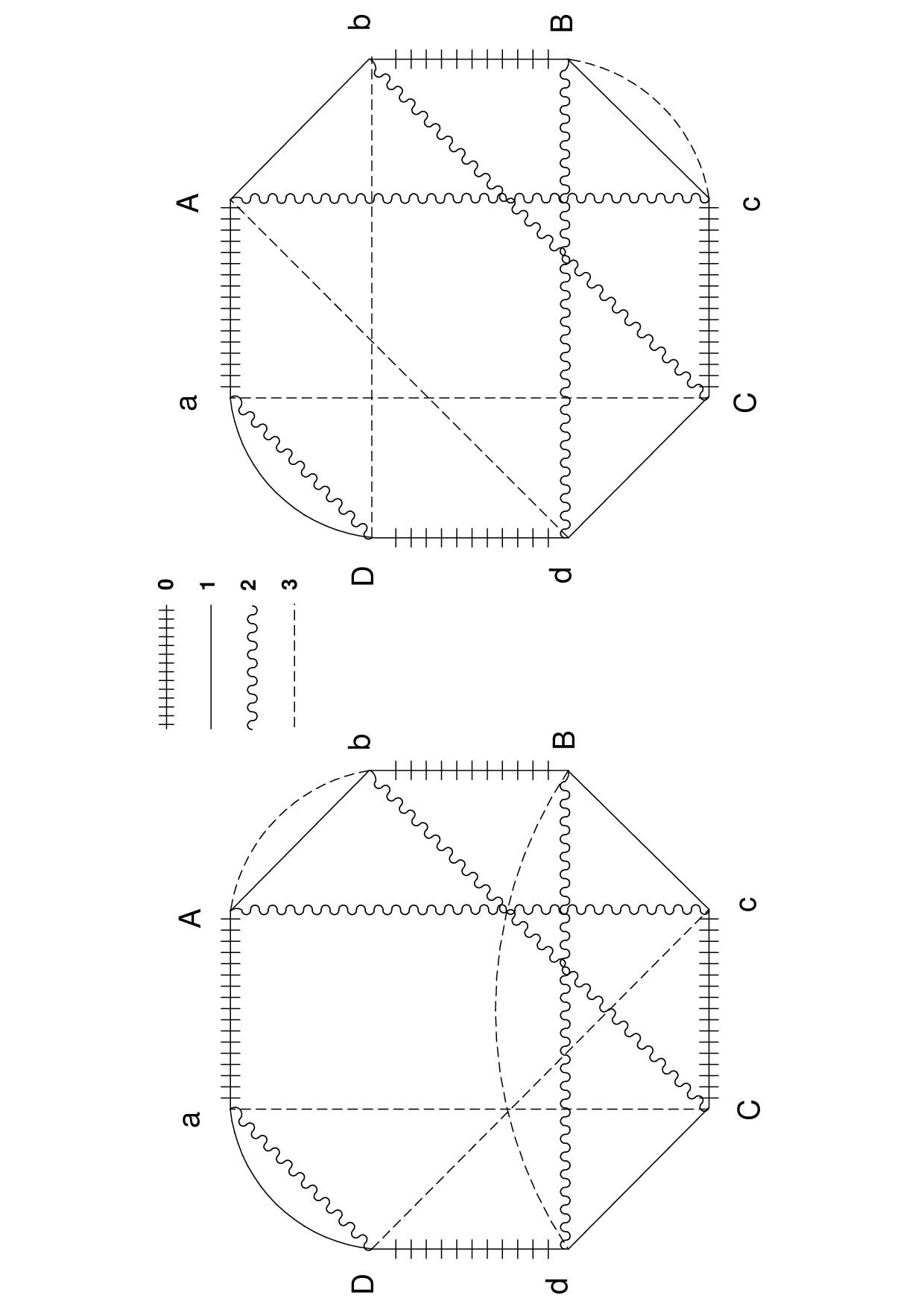}}}
\caption{Two non-isomorphic graphs representing the product of $\mathbf S^1$ with the 3-punctured sphere}\label{Fig11}
\end{figure}

As it is clear from Table \ref{table1}, the number of generated graphs quickly increases with the number of vertices and becomes very large even in the initial segment of the catalogues. However, Table \ref{table1} also shows that numbers are much smaller in the case of connected boundary, especially if it is toric. 

Therefore, as further development, we have restricted our attention to the study of orientable manifolds with (possibly disconnected) toric boundary. In a forthcoming paper (\cite{[CFMT]}), we have completed the classification of the manifolds involved in Proposition \ref{class_or}: all of them turn out to be complement of knots or links in $\mathbf S^3$. In particular, the two elements of $C^{(6)}$ represent the complements of the Hopf link and the trivial knot, while the manifolds $M_{\G_1},M_{\G_2},M_{\G_3}$ of the above proposition are complements of the links L8n7, L6n1 and L8n8 respectively (notations are according to Thistlethwaite Link Table).

\bigskip
\bigskip
\bigskip

{\it Acknowledgement.} Work performed under the auspices of the
G.N.S.A.G.A. of I.N.d.A.M. (Italy) and financially supported by
M.I.U.R. of Italy, University of Modena and Reggio Emilia and
University of Bologna, funds for selected research topics.
\smallskip

\noindent We would like to thank the referee for his helpful comments and suggestions.


\begin{thebibliography}{5}

\bibitem{[BCCGM]} Bandieri, P., Casali, M.R., Cristofori, P., Grasselli, L., Mulazzani, M.: Computational aspects of crystallization theory: complexity,
catalogues  and classification of $3$-manifolds. Atti Sem. Mat.
Fis. Univ. Modena {\bf 58}, 11-45 (2011)

\bibitem{[BCrG$_1$]} Bandieri, P., Cristofori, P., Gagliardi, C.: Nonorientable $3$-manifolds admitting coloured  triangulations with at most $30$ tetrahedra. J. Knot Theory Ramifications {\bf 18}, 381-395 (2009)

\bibitem{[BR]} Bandieri, P., Rivi, M.: Some bounds for the genus of $M^n\times I$. Note Mat.  {\bf 18}, 175-190 (1998)

\bibitem{[BM]} Bracho, J., Montejano, L.: The combinatorics of colored triangulations of manifolds. Geom. Dedicata {\bf 22} (3), 303-328 (1987) 

\bibitem{[C$_1$]} Casali, M.R: An equivalence criterion for $3$-manifolds. Rev. Mat. Univ. Complut. Madrid  {\bf 10}, 129-147 (1997)

\bibitem{[C$_2$]} Casali, M.R.: Computing Matveev's complexity of non-orientable
3-manifolds via crystallization theory. Topology Appl. {\bf 144}, 201-209 (2004)

\bibitem{[CC]} Casali, M.R., Cristofori, P.: A catalogue of orientable $3$-manifolds triangulated by $30$ coloured tethraedra. J. Knot Theory
Ramifications {\bf 17}, 579-599 (2008)

\bibitem{[CC$_1$]} Casali, M.R., Cristofori, P.: Computing Matveev's complexity via crystallization theory: the orientable case. Acta Appl. Math. {\bf 92}, 113-123 (2006)

\bibitem{[CC$_2$]} Casali, M.R., Cristofori, P.: Computing Matveev's complexity via crystallization theory: the boundary case. J. Knot Theory Ramifications {\bf 22 (8)}, 1350038 (30 pages) (2013). doi:10.1142/S0218216513500387

\bibitem{[CCM]} Casali, M.R., Cristofori, P., Mulazzani, M.: Complexity computation for compact 3-manifolds via crystallizations and Heegaard diagrams. Topology Appl. {\bf  159} (13), 3042-3048 (2012)

\bibitem{[CG]} Casali, M.R., Gagliardi, C.: A code for $m$-bipartite edge-coloured graphs. dedicated to the memory of Marco Reni, Rend. Ist. Mat. Univ. Trieste {\bf 32}, 55-76 (2001)

\bibitem{[CMV]} Cattabriga, A., Mulazzani, M., Vesnin, A.: Complexity, Heegaard diagrams and generalized Dunwoody manifolds. J. Korean Math. Soc. {\bf 47}, 585-599 (2010)

\bibitem{[Cr$_1$]} Cristofori, P.: Heegaard and regular genus agree for compact $3$-manifolds. Cahiers Topologie Geom. Differentielle Categ. {\bf 39}, 221-235 (1998)

\bibitem{[CFMT]} Cristofori, P., Fominykh, E., Mulazzani, M., Tarkaev, V.: 4-colored graphs and knot/link complements (2015, in preparation). 

\bibitem{[CGG]} Cristofori, P., Gagliardi, C., Grasselli, L.: Heegaard and
regular genus of $3$-manifolds with boundary.  Rev. Mat. Univ. Complut. Madrid  {\bf 8}, 379-398 (1995)

\bibitem{[E]} Epstein, D.B.A.: Factorization of 3-manifolds. Comment. Math. Helv. {\bf 36}, 91-102 (1961)

\bibitem{[FG1]} Ferri, M., Gagliardi, C.: A characterization of punctured $n$-spheres. Yokohama Math. J. {\bf 33}, 29-38 (1985)

\bibitem{[FGG]} Ferri, M., Gagliardi, C., Grasselli, L.: A
graph-theoretical representation of PL-manifolds. A survey on
crystallizations. Aequationes Math. {\bf 31}, 121-141 (1986)

\bibitem {[G$_0$]} Gagliardi, C.: Cobordant crystallizations. Discrete Math. {\bf 45}, 61-73 (1983)

\bibitem {[G$_1$]} Gagliardi, C.: On a class of $3$-dimensional polyhedra. Ann. Univ. Ferrara {\bf 33}, 51-88 (1987)


\bibitem{[G$_3$]} Gagliardi, C.: Regular imbeddings of
edge--coloured graphs. Geom. Dedicata {\bf 11}, 297-314 (1981)

\bibitem {[G$_4$]} Gagliardi, C.: Regular genus: the boundary case. Geom. Dedicata {\bf 22}, 261-281 (1987)

\bibitem {[G$_5$]} Gagliardi, C.: How to deduce the fundamental group of a closed $n$-manifold from a contracted triangulation. J. Combin. Inf. Sist. Sci. {\bf 4}, 237-252 (1979)

\bibitem{[Gr]} Grasselli, L.: Edge-coloured graphs and associated groups. Rend. Circ. Mat. Palermo {\bf 12}, 263-269 (1986)

\bibitem{[He]} Hempel, J.: $3$-manifolds. Annals of Math. Studies, {\bf 86}, Princeton Univ. Press (1976)

\bibitem{[HeJ]} Hempel, J., Jaco, W.: 3-manifolds which fibers over a surface. Amer. J. Math. {\bf 94}, 189-205 (1972)

\bibitem{[HW]} Hilton, P.J., Wylie, S.: An introduction to
algebraic topology - Homology theory.  Cambridge Univ. Press,
(1960)

\bibitem{[Li]} Lins, S.: Gems, computers and attractors for
$3$-manifolds. Knots and Everything {\bf 5}, World Scientific
(1995)

\bibitem{[LM]} Lins, S., Mandel, A.: Graph-encoded 3-manifolds. Discrete Math. {\bf 57}, 261-284 (1985)

\bibitem{[M$_0$]} Matveev, S.: Transformations of special spines, and the Zeeman's conjecture.  Izv. Akad. Nuk SSSR Ser. Mat. {\bf 51}, 1104-1116 (1987)
(English trans. in: Math. USSR-Izv. {\bf 31}, 423-434 (1988))

\bibitem{[M$_1$]} Matveev, S.: Complexity theory of
three-dimensional manifolds. Acta Appl. Math. {\bf 19},
101-130 (1990)

\bibitem{[Ma]} Matveev, S.: Algorithmic topology and classification of
$3$-manifolds. ACM-Monographs {\bf 9}, Springer-Verlag,
Berlin-Heidelberg-New York (2003)

\bibitem{[P]} Pezzana, M.: Sulla struttura topologica delle variet\`a compatte. Atti Sem.
Mat. Fis. Univ. Modena {\bf 23}, 269-277 (1974)   

\bibitem{[Pi]} Piergallini, R.: Standard moves for standard polyhedra and spines. Rend. Circ. Mat. Palermo {\bf 18}, 391-414 (1988)

\bibitem{[GAP2013]} The GAP Group, GAP -- Groups, Algorithms, and Programming. Version 4.6.2 (2013). http://www.gap-system.org

\bibitem{[Wh]} White, A.T.: Graphs, groups and surfaces. North Holland (1973)

\end{thebibliography}
\end{document}